\documentclass[reqno,11pt]{amsart}
\usepackage[letterpaper,hmargin=1.25in,vmargin=0.9in]{geometry}
\usepackage[T1]{fontenc}
\usepackage{lmodern}
\usepackage{microtype}
\usepackage{mathtools,amssymb,mathrsfs}
\usepackage{graphicx,booktabs,tikz,placeins}
\usepackage[unicode,hidelinks]{hyperref}
\allowdisplaybreaks[1]
\newcommand{\R}{\mathbb R}
\DeclareMathOperator{\conv}{conv}
\newcommand{\inc}[3]{\{#1,#2\}\rightsquigarrow #3}
\newcommand{\wit}[4]{(#1,#2;#4)\rightsquigarrow #3}
\newcommand{\simplexfactor}{\lambda_{\mathrm{simplex}}^{(2)}}

\theoremstyle{plain}
\newtheorem{theorem}{Theorem}[section]
\newtheorem{proposition}[theorem]{Proposition}
\newtheorem{lemma}[theorem]{Lemma}

\theoremstyle{definition}

\theoremstyle{remark}

\numberwithin{equation}{section}
\title[Non-Separable Homothetic Triangles, Part I]
{Non-Separable Homothetic Triangles, Part I: Constructions and Lower Bounds}
\author{Yanlu Lian}
\address{School of Mathematics, Hangzhou Normal University, Hangzhou 311121, China}
\address{School of Computational and Mathematical Sciences, Cardiff University, Cardiff, UK}
\email{yllian@hznu.edu.cn}
\author{Fei Xue}
\address{School of Mathematical Sciences, Nanjing Normal University, Nanjing 210046, China}
\email{05429@njnu.edu.cn}
\author{Qihui Yuan}
\address{School of Mathematical Sciences, Nanjing Normal University, Nanjing 210046, China}
\email{18220134@njnu.edu.cn}
\thanks{This work is supported by the NSFC grant (12671118), the Natural Science Foundation of Zhejiang
Province grant (LQ24A010008) and the Scientific Research Fund of Zhejiang
Provincial Education Department grant (Y202351675).}
\subjclass[2020]{Primary 52A10; Secondary 52A35, 52C17}
\keywords{non-separable family, positive homothet, triangle, covering factor, cyclic construction}
\date{}
\hypersetup{pdftitle={Non-Separable Homothetic Triangles, Part I: Constructions and Lower Bounds},
pdfauthor={Yanlu Lian, Fei Xue, and Qihui Yuan},pdfsubject={Convex geometry},
pdfkeywords={non-separable family, homothetic triangle, covering factor, cyclic construction}}
\begin{document}
\begin{abstract}
A finite family of planar convex bodies is called a non-separable family if no line disjoint from its union has at least one member in each open half-plane. In this paper, we prove that there exist finite non-separable families of positive homothetic triangles with covering factors strictly exceeding the sharp three-member bound $\mu = \frac{2}{3} + \frac{2}{3\sqrt{3}}$. This resolves negatively a question posed by K. Bezdek and Z. Lángi, who originally established this three-member bound after proving that the classic factor 1 covering theorem by A. W. Goodman and R. E. Goodman for disks fails for arbitrary positive homothets. Besides giving explicit algebraic examples with four, five, and six members having factors of approximately 1.0533161, 1.0551900, and 1.0572061 respectively, we provide a common cyclic recurrence that yields a 303-member family with the exact factor 250000000/235141779. Finally, we derive a continuous model suggested by increasingly fine recurrences, yielding a numerical candidate of 1.0633083;
its attainability and optimality remain open.
\end{abstract}
\maketitle
\section{Introduction}\label{sec:introduction}

A finite family of planar convex bodies is \emph{non-separable}, or an
\emph{NS-family}, if no line disjoint from its union has at least one member
in each open half-plane. For a convex body $K$ and a family
$\mathcal F=(\xi_i+\tau_iK)_{i=1}^n$, with $\tau_i>0$, its
\emph{covering factor} is
\begin{equation}\label{eq:covering-factor}
\lambda(\mathcal F)=
\frac{\min\{s>0:\bigcup_i(\xi_i+\tau_iK)\subseteq\eta+sK
                 \text{ for some }\eta\in\R^2\}}
     {\sum_i\tau_i}.
\end{equation}
Both non-separability and this factor are invariant under nonsingular
affine maps. For a fixed triangle $\Delta$, write
\[
\Lambda_n=\sup\{\lambda(\mathcal F):\mathcal F\text{ is an }n
\text{-member NS-family of positive homothets of }\Delta\},
\]
\[
\simplexfactor=\sup_{n\ge2}\Lambda_n.
\]
The choice of $\Delta$ does not affect these constants. By the reduction
of Bezdek and L\'angi \cite[Section~2]{BezdekLangi2016}, also recalled in
\cite[Theorem~1]{BezdekLangi2025}, $\simplexfactor$ is the corresponding
supremum over positive homothets of arbitrary planar convex bodies.

Goodman and Goodman \cite{GoodmanGoodman1945} proved that an NS-family of
disks is covered by a disk whose radius is the sum of the member radii.
Hadwiger \cite{Hadwiger1947} developed the corresponding theory for convex
systems. The extension with factor $1$ to arbitrary positive homothets
fails: Bezdek and L\'angi \cite{BezdekLangi2016} constructed triangular
examples and proved the sharp three-member bound
\begin{equation}\label{eq:mu}
\Lambda_3=\mu:=\frac23+\frac{2}{3\sqrt3}.
\end{equation}
They asked whether this same factor works for every finite planar
NS-family \cite[Section~3]{BezdekLangi2016}; see also
\cite[Section~1]{BezdekLangi2025}. The upper bound of Akopyan,
Balitskiy and Grigorev \cite[Corollary~2.3]{AkopyanBalitskiyGrigorev2018}
gives $\simplexfactor\le3/2$.

We answer the question about $\mu$ negatively and obtain successively
larger covering factors by changing the configuration and increasing the
number of members. The largest exact value exhibited here is the following.

\begin{theorem}\label{thm:main}
There is a cyclically symmetric NS-family $\mathcal G_{303}$ of $303$
positive homothetic triangles with rational parameters such that
\begin{equation}\label{eq:main-bound}
\lambda(\mathcal G_{303})=\frac{250000000}{235141779}>1.0631883498.
\end{equation}
Consequently $\simplexfactor\ge250000000/235141779$.
\end{theorem}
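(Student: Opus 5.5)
The proof will be constructive and certificate-based: write down the family explicitly, then verify two things by exact rational arithmetic — non-separability, and the value of the minimal covering homothet.

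\textbf{Step 1: the data.} Fix $\Delta$ to be the triangle with vertices $(0,0),(1,0),(0,1)$, or better an equilateral-type affine image adapted to the $3$-fold rotation. A positive homothet $\xi+s\Delta$ is then described by three support numbers $a,b,c$ in the three outer normal directions $u_1,u_2,u_3$ of $\Delta$. Here $a+b+c$ is proportional to $s$, and the smallest homothet containing a union has support numbers equal to the coordinatewise maxima. Concretely,
\[
\min\{s:\textstyle\bigcup_i(\xi_i+\tau_iK)\subseteq\eta+sK\}=\max_i h_1^{(i)}+\max_i h_2^{(i)}+\max_i h_3^{(i)}
\]
in normalized units. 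This turns the numerator of the covering factor into an elementary expression. I will define the $303=3\cdot101$ members by a cyclic recurrence: a chain of $101$ triangles $T_0,\dots,T_{100}$, with rational sizes and positions generated by a fixed affine recurrence, and its images under the rotation of order $3$ permuting $u_1\to u_2\to u_3$.

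\textbf{Step 2: non-separability.} A separating line disjoint from the union must pass through a gap. I plan to reduce non-separability to a combinatorial chain condition. Consecutive triangles $T_j,T_{j+1}$ in the chain touch or overlap, so their union is connected, and the three rotated chains link up cyclically. The whole union is then connected, and a connected union cannot be separated by a line disjoint from it.

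Connectivity alone may not be what the construction uses, though. The extremal configurations for $\Lambda_3$ are touching but \emph{not} overlapping, and a line through a contact point is not disjoint from the union. So I will verify that every consecutive pair $T_j,T_{j+1}$ (including wrap-around pairs between rotated copies) shares a point, using the recurrence. Each such contact reduces to one exact linear equality or inequality among the rational parameters, for example that a vertex of $T_{j+1}$ lies on an edge of $T_j$. Because of the recurrence, these are finitely many parametric identities rather than $303$ separate checks.

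\textbf{Step 3: the factor.} Using the support-number formula, compute $\sum_i\tau_i$ as three times the sum over one chain, and each $\max_i h_k^{(i)}$, which by symmetry is the same for $k=1,2,3$. Identifying which member attains the maximum will be done by showing monotonicity of $h_k$ along the chain, and this is where the recurrence helps. The ratio is then an explicit rational number, to be checked equal to $250000000/235141779$. The inequality $>1.0631883498$ is a direct comparison of integers. The final claim $\simplexfactor\ge\lambda(\mathcal G_{303})$ is immediate from the definition.

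The main obstacle is designing the recurrence itself, that is, finding rational parameters for which all contact conditions in Step 2 hold \emph{simultaneously} while the sizes decay in a way that pushes the factor above $\mu$. I would first solve the continuous optimization heuristically, then discretize with step $1/101$, adjusting each step so that contacts remain exact. Verifying the contacts through the cyclic wrap-around is the delicate check.
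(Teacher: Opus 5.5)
Your Steps~1 and~3 are fine: the support-number description of the minimal container is the dual form of Lemma~\ref{lem:minimal-container}. Step~2, however, has a fatal gap. Certifying non-separability by pairwise contacts cannot coexist with any covering factor above $1$.

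If consecutive members share points and the three rotated chains are linked, the intersection graph of the family is connected, and then $\lambda\le1$. To see this, remove a leaf $F$ of a spanning tree. By induction, cover the union of the remaining members by $\eta+sK$, where $s$ is the sum of their ratios. This homothet meets $F$, because $F$ meets its tree neighbour; pick $p\in F\cap(\eta+sK)$. Then $F\cup(\eta+sK)\subset F+(\eta+sK)-p$, which is a homothet of ratio $\tau_F+s$. This is just the two-member argument of Section~\ref{sec:model} iterated. It rules out the value $250000000/235141779$ for every recurrence you might design.

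The same reasoning shows that any three-member family with factor near $\mu>1$ has a disconnected intersection graph. So the real issue is not touching versus overlapping: members with factor above $1$ cannot all be linked by contacts. In the paper's family even neighbouring members are disjoint. By Lemma~\ref{lem:positive-part} with $\theta=1$, $T(\mathbf p^{(0)})$ and $T(\mathbf p^{(1)})$ meet only if $10^{-9}(\max\{X_0,X_1\}+\max\{Y_0,Y_1\}+\max\{Z_0,Z_1\})\le1$. That sum is actually $1.000000907$, and for $k=1,2$ it is $1.000001823$.

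What you are missing is a mechanism that holds \emph{disjoint} members together: segment relations $\inc{i}{j}{k}$, meaning $A_k\cap\conv(A_i\cup A_j)\ne\varnothing$. A separating line two-colours the family. Two sources of the same colour force that colour on the target, because every segment joining them stays in one open half-plane. So it suffices to exhibit relations that admit only constant colourings (Lemma~\ref{lem:color-criterion}).

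For $1\le k\le m$ and $i\in\mathbb Z/3\mathbb Z$, the paper uses $\inc{P_{k-1,i}}{C_{i-1}}{P_{k,i}}$, $\inc{P_{k,i}}{P_{m,i-1}}{P_{k-1,i}}$ and $\inc{P_{m,i}}{C_{i-1}}{P_{m,i+1}}$. Suppose $C_0,C_1$ are red and $C_2$ is blue. The first relations make all of copy $1$ red, the terminal relation makes $P_{m,2}$ red, and the second relations then propagate red down copy $2$ to $C_2$ (Lemma~\ref{lem:chain-ns}). Accordingly, your recurrence should place a vertex of each new bridge on a segment from the previous member to the support $C_{i-1}$. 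It should also place a vertex of the previous member on a segment from the new bridge to the last bridge of copy $i-1$, as in \eqref{eq:chain-vertex-contacts}, rather than make neighbours touch.

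Each relation is then certified by the coordinate test $\sum_r\max\{\theta x_r+(1-\theta)y_r,z_r\}\le1$. This is checked at $\theta=0,1$ and at the at most three rational breakpoints. The rational data come from rounding coordinates downward, which only enlarges the triangles, so no exact contacts are needed. The $201$ relations with $i=0$ hold with margin, and cyclic symmetry gives the rest.
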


The construction is completely specified by the integer coordinates in
Appendix~\ref{app:large-data}. Its verification reduces to rational
comparisons using the coordinate test of Section~\ref{sec:model} and a
short argument excluding a separating line. Section~\ref{sec:finite-bounds}
gives that verification and explains how the example was found.

For four, five and six members, we give the configurations with the largest
covering factors found in our numerical searches within their respective
contact models. Each is selected by the first-order stationarity equations
for the covering factor subject to the prescribed contacts. We specify the
parameters as isolated solutions of rational polynomial systems and verify
the constructions by exact rational interval calculations.

\begin{theorem}\label{thm:small-examples}
There are NS-families $\mathcal F_4,\mathcal F_5,\mathcal F_6$ of four,
five and six positive homothetic triangles, respectively, with real-algebraic
parameters and
\[
\begin{array}{c|ccc}
n&4&5&6\\ \hline
\lambda(\mathcal F_n)&1.0533161\ldots&1.0551900\ldots&1.0572061\ldots.
\end{array}
\]
They satisfy $\mu<\lambda(\mathcal F_4)<\lambda(\mathcal F_5)
<\lambda(\mathcal F_6)$. Four is the least cardinality for which the factor
$\mu$ fails.
\end{theorem}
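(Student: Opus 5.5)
We take $\Delta$ to be a fixed reference triangle, say $\Delta=\conv\{0,e_1,e_2\}$. This is allowed because both non-separability and $\lambda$ are affine invariant, and the choice of $\Delta$ does not affect the constants. A homothet $\xi+\tau\Delta$ is then recorded by three support values: its lower bounds in the two coordinate directions and its upper bound for $x+y$. The smallest homothet $\eta+s\Delta$ containing a union is read off coordinatewise: $s$ equals the maximum of $x+y$ over the union, minus the minimum of $x$, minus the minimum of $y$. So for each explicit family the numerator of \eqref{eq:covering-factor} is a max/min of finitely many algebraic numbers. The factor is this expression divided by $\sum\tau_i$.

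The plan has four steps.

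\begin{enumerate}
\item \textbf{Existence.} For each $n\in\{4,5,6\}$ we fix a combinatorial contact pattern, namely which pairs of triangles touch and along which edge or vertex. Each contact is a linear equation in the parameters $(\xi_i,\tau_i)$. We then impose the first-order Lagrange stationarity conditions for the ratio $\lambda$ subject to these contacts. Together these give a polynomial system with rational coefficients. We exhibit an isolated real root, certified by an interval Newton/Krawczyk test in exact rational interval arithmetic. The certification shows that the root exists, is unique in a small box, and yields $\lambda(\mathcal F_n)$ to the stated digits. The strict chain $\mu<\lambda(\mathcal F_4)<\lambda(\mathcal F_5)<\lambda(\mathcal F_6)$ then follows from the disjoint enclosures.

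\item \textbf{Non-separability.} A separating line disjoint from the union either avoids every triangle's interior or meets none of them. It suffices to check that each pair of triangles is not separated by a line that avoids the others. Concretely, we show the union is "connected through overlaps or touchings along a line-blocking pattern." The cleanest route is the coordinate test of Section~\ref{sec:model}: a line $\ell$ misses the union and separates it exactly when, in its direction, the projections of the members leave a gap with members on both sides. Since the combinatorics is fixed on the certified box, this reduces to finitely many inequalities, which are strict and hence robust under the interval enclosure. Contacts that are exact touchings are handled symbolically, since they hold identically on the solution variety.

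\item \textbf{Checking the covering triangle.} We verify which members attain the three extremal support values, so that the numerator formula is the correct one near the root.

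\item \textbf{Minimality: four is the least cardinality.} Bezdek and L\'angi's result gives $\Lambda_3=\mu$, and $\Lambda_2\le\mu$ should follow by degenerating a three-member family, for instance by taking a third member of vanishing size. Combined with $\lambda(\mathcal F_4)>\mu$, this shows that four is the least cardinality for which $\mu$ fails.
\end{enumerate}

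The main obstacle is step 2 for configurations where members only touch. Non-separability is not an open condition there, so interval perturbation cannot certify it. We handle it by keeping those contacts as exact algebraic identities and checking, for each touching pair, that every line through the contact point is blocked by a third member. That check is again a finite angular case analysis with strict inequalities.
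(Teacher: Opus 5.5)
\textbf{Gap: the contact mechanism.} Steps~1--2 rest on the wrong mechanism, and as stated they cannot produce any family with factor above $\mu$. You build the contact pattern from pairs of triangles that touch. But whenever the intersection graph of a family of positive homothets is connected, $\lambda\le1$. Indeed, for $p\in F_1\cap F_2$ we have $F_1\cup F_2\subseteq F_1+F_2-p$, a homothet of ratio $\tau_1+\tau_2$ that still meets every member met by $F_1$ or $F_2$. Merging along a spanning tree then puts the whole union inside one homothet of ratio $\sum_i\tau_i$. Hence any family with $\lambda>\mu>1$ has a disconnected intersection graph; in $\mathcal F_4$, for instance, all six pairs are disjoint. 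Its non-separability must therefore be forced across components that do not touch.

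The idea you are missing is the segment relation $\inc{i}{j}{k}$, meaning $T_k\cap\conv(T_i\cup T_j)\ne\varnothing$, combined with the coloring criterion of Lemma~\ref{lem:color-criterion}. If a line avoids the family and $T_i,T_j$ lie on one side, then so does $T_k$. For a suitable finite list of such relations, a short combinatorial argument shows that every compatible two-coloring is constant. Accordingly, the contacts fed into the stationarity system are vertex-on-segment identities: a vertex of $T_k$ equals $\theta a+(1-\theta)b$ for vertices $a\in T_i$, $b\in T_j$. The weights $\theta$ are extra unknowns, so these are bilinear rather than linear equations.

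\textbf{The obstacle you flag.} Your proposed fix fails for the same reason. The tight contacts are not pairwise touchings, so checking lines through contact points of touching pairs certifies nothing here. Your projection criterion is correct, but it is not the coordinate test of Section~\ref{sec:model}. It leaves a continuum of directions, some inequalities holding with equality, and no finite exact certificate. In the paper, each relation $\inc{i}{j}{k}$ is decided by the scalar inequality $M(x,y,z;\theta)\le1$ of Lemma~\ref{lem:positive-part}, equivalently by eight subset inequalities. The tight ones are among the defining polynomial equations, so they hold exactly at the certified algebraic root. All the others are shown to have positive slack on the whole rational box. Non-separability then follows from a finite combinatorial check, with no scanning over directions.

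Beyond this, your plan never fixes the configurations, so the stated values of $\lambda(\mathcal F_n)$ are not actually established. Step~4 is fine: a tiny member placed at a point of $F_1\cap F_2$ keeps the union connected and gives $\Lambda_2\le\Lambda_3=\mu$. The merging argument above gives the sharper bound $\Lambda_2\le1$ of \eqref{eq:two-member-bound} directly, and that is what the paper uses.
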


Section~\ref{sec:three-configurations} gives the geometric descriptions and
NS proofs; Appendix~\ref{app:exact} supplies the algebraic definitions,
root-isolation bounds and exact verification. The least-cardinality
assertion uses \eqref{eq:mu} and the two-member bound in Section~\ref{sec:model}.
The calculations establish the examples and their covering factors, but do
not establish their optimality. The sharp four-member problem and its
equality cases are reserved for Part~II; this paper is independent of that
classification.

In each example, three members touch the three sides of a smallest
containing triangle. We call these the \emph{supporting members}, and the
remaining, smaller members \emph{bridges}. The bridges meet selected
segments joining other members. If a line avoids the family, two members
on the same side can only have segments in that half-plane; any member
meeting such a segment must then lie on that side as well. The selected
relations propagate this restriction through the family and rule out
separation. The added bridges thus allow the supporting configuration to
change while preserving non-separability.

The six-member example has threefold cyclic symmetry. Motivated by its
contact pattern, the nine-member example of Section~\ref{sec:nine} uses two
bridges in each cyclic chain, with adjusted supporting members, and has
\[
\lambda(\mathcal G_9)=\frac{625}{591}>\lambda(\mathcal F_6).
\]
For longer chains, Section~\ref{sec:chains} gives a common recurrence and
reduces the prescribed final contact to two product equations. Under
explicit inequalities, those equations guarantee that every intermediate
triangle and segment relation is valid. We also prove that arranging a
fixed collection of steps in increasing order minimizes the total scale.
The supporting configuration and the step values can then be optimized
numerically; one resulting rational family proves Theorem~\ref{thm:main}.

Finally, Section~\ref{sec:continuous} derives a continuous model from small
recurrence steps, with numerical candidate $1.0633083$. Its three supports
remain of positive scale while fine bridges follow three contact curves.
Whether suitably optimized finite chains approach this value, and whether
it is optimal, remain open.

\section{Coordinates and segment relations}\label{sec:model}

After an affine change of coordinates, we may work with triangles contained
in the standard simplex
\[
\Pi=\{z\in\R^3:z_1+z_2+z_3=1\},\qquad
\Sigma=\Pi\cap\R_{\ge0}^3.
\]
The map $z\mapsto(z_2,z_3)$ identifies $\Sigma$ with the planar triangle
$\Delta=\conv\{(0,0),(1,0),(0,1)\}$ used in our drawings.

For a vector $x=(x_1,x_2,x_3)\ge0$ with $|x|_1=x_1+x_2+x_3<1$, put
\begin{equation}\label{eq:defect-triangle}
T(x)=\{z\in\Sigma:z\ge x\}=x+t(x)\Sigma,\qquad t(x)=1-|x|_1.
\end{equation}
All vector inequalities are coordinatewise. Every positive homothet
contained in $\Sigma$ has this form, and $x_r$ is the minimum of its
$r$th coordinate. Thus $x$ records the positions of the three sides and uniquely determines
the triangle. We call $x$ its parameter vector. The number $t(x)$ is the homothety
ratio, also called the \emph{scale} of the member. With $e_r$ denoting the
$r$th standard basis vector of $\R^3$, its vertices are
\begin{equation}\label{eq:vertex-map}
V_r(x)=x+t(x)e_r\qquad(r=1,2,3).
\end{equation}
For a family, we write $T_i=T(x^{(i)})$, $t_i=t(x^{(i)})$, and
$x^{(i)}=(x^{(i)}_1,x^{(i)}_2,x^{(i)}_3)$. The superscript labels the
member; the subscript labels a coordinate.

\begin{lemma}[Minimal container]\label{lem:minimal-container}
For a finite family $(T(x^{(i)}))_{i=1}^n$, the smallest containing positive
homothet of $\Sigma$ has ratio
\begin{equation}\label{eq:minimal-container}
1-\sum_{r=1}^3\min_i x^{(i)}_r.
\end{equation}
\end{lemma}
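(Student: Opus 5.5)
The plan is to reduce containment in a positive homothet of $\Sigma$ to coordinatewise inequalities on parameter vectors, and then optimize each coordinate independently. A positive homothet of $\Sigma$ inside the plane $\Pi$ has the form $\eta+s\Sigma$ with $\eta\in\Pi-\text{(direction)}$; more convenient is the parametrization $\{z\in\Pi:z\ge y\}$ for an arbitrary vector $y\in\R^3$ with $|y|_1<1$. This set is $y+(1-|y|_1)\Sigma$, so its ratio is $1-|y|_1$. Note that $y$ need not be nonnegative here, since the container need not lie in $\Sigma$. First I will check that every positive homothet $\eta+s\Sigma$ with $s>0$ arises this way. Take $y=\eta$ with $|\eta|_1=1-s$, viewing $\eta$ as a vector with the appropriate coordinate sum; equivalently, each positive homothet is cut out by three half-planes $z_r\ge y_r$ with $1-|y|_1=s$.

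Next I will show that containment is monotone in $y$: $T(x)\subseteq\{z\in\Pi:z\ge y\}$ if and only if $x\ge y$ coordinatewise. For the forward direction, $x_r$ is the minimum of $z_r$ over $T(x)$. It is attained at the vertices $V_{r'}(x)$ with $r'\ne r$, by \eqref{eq:vertex-map}. Hence containment forces $x_r\ge y_r$. For the converse, every $z\in T(x)$ satisfies $z\ge x\ge y$ and lies in $\Pi$.

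Then the union of the family lies in the container defined by $y$ if and only if $y_r\le m_r:=\min_i x^{(i)}_r$ for each $r$. The ratio $1-|y|_1$ is minimized by taking $y=m$, which is admissible because $|m|_1\le|x^{(1)}|_1<1$. This gives exactly \eqref{eq:minimal-container}. Since the minimal containing triangle of the union equals the minimal one containing all members, this finishes the argument.

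The only delicate point is the parametrization step: we must ensure that containers are not restricted to lie in $\Sigma$ and that the identification $\eta+s\Sigma=\{z\ge y\}$ is bijective. I would handle this by computing directly that $\eta+s\Sigma=\{z\in\Pi: z\ge \eta'\}$, where $\eta'$ is determined by $\eta$ and $s$ with $|\eta'|_1=1-s$. Everything else is routine.
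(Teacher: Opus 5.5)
Your proposal is correct and matches the paper's proof: the paper also writes each positive homothet in $\Pi$ as $b+s\Sigma=\{z\in\Pi:z\ge b\}$ with $s=1-\sum_r b_r$, notes that it contains the family exactly when $b\le m$ coordinatewise, and takes $b=m$. The parametrization step you flag as delicate is immediate: $\eta+s\Sigma\subset\Pi$ with $\eta\in\R^3$ already forces $|\eta|_1=1-s$ (look at the point $\eta+se_1$), so you can simply take $\eta'=\eta$.
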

\begin{proof}
A positive homothet in $\Pi$ has the form $b+s\Sigma$ with $s=1-\sum_rb_r>0$,
and consists of the points $z\in\Pi$ with $z\ge b$. It contains the given
family exactly when $b_r\le m_r:=\min_i x^{(i)}_r$. Its ratio is therefore
at least $1-\sum_rm_r$, attained at $b=m$. This ratio is positive since
$\sum_rm_r\le|x^{(i)}|_1<1$.
\end{proof}
For each coordinate, a member attaining its minimum touches the
corresponding side of the container. The three supporting members need not themselves
form an NS-family. When all three minima are zero, the total scale and
covering factor are
\begin{equation}\label{eq:normalized-cost}
S=\sum_i t(x^{(i)}),\qquad \lambda((T(x^{(i)}))_i)=S^{-1}.
\end{equation}

We prove non-separability by finding segments that join two members and
meet a third. For nonempty convex sets $A_i,A_j,A_k$, write
$\inc{i}{j}{k}$ when $A_k\cap\conv(A_i\cup A_j)\ne\varnothing$.
This means that there are points $a\in A_i$, $b\in A_j$ and a number
$\theta\in[0,1]$ for which $\theta a+(1-\theta)b\in A_k$; equivalently,
\begin{equation}\label{eq:segment-incidence}
A_k\cap(\theta A_i+(1-\theta)A_j)\ne\varnothing
\quad\text{for some }\theta\in[0,1].
\end{equation}
When we record a particular value of $\theta$ satisfying this relation,
we write $\wit{i}{j}{k}{\theta}$. Thus $\theta$ is the coefficient of the
point chosen in $A_i$, and $1-\theta$ is the coefficient of the point
chosen in $A_j$. Neither the points nor the weight need be unique.
The notation applies both to exact vertex contacts and to segments that
pass through the interior of $A_k$. We sometimes use the names of the
sets in place of their indices.

To see why such relations are useful, suppose that a line avoids every
member. Color the members red on one side and blue on the other. If
$A_i,A_j$ have the same color, every segment joining them stays in that
open half-plane. The relation $\inc{i}{j}{k}$ then forces $A_k$ to have
the same color. We will choose enough verified relations to force all
members to have one color, contradicting separation.

Formally, a \emph{two-coloring} assigns red or blue to each member. It is
\emph{compatible} with the chosen segment relations if
\begin{equation}\label{eq:color-implication}
c(i)=c(j)\quad\Longrightarrow\quad c(k)=c(i)
\end{equation}
whenever $\inc{i}{j}{k}$ is among those relations. Only the relations
used in the proof need to be listed.

\begin{lemma}[Coloring criterion]\label{lem:color-criterion}
If the only compatible two-colorings of a finite family of nonempty convex
sets are constant, then the family is NS.
\end{lemma}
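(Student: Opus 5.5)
We argue by contraposition. Suppose the family $(A_i)_{i=1}^n$ is not NS. By definition, there is then a line $\ell$ disjoint from $\bigcup_iA_i$ such that each of the two open half-planes bounded by $\ell$ contains at least one member.

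First, each member lies entirely in one of the open half-planes. Indeed, $A_i$ is convex and misses $\ell$. If $A_i$ had points on both sides of $\ell$, the segment joining them would lie in $A_i$ and would cross $\ell$, a contradiction. So we may color $A_i$ red if it lies in the first open half-plane and blue if it lies in the second. By the choice of $\ell$, both colors occur, so this coloring is not constant.

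Next, we check that this coloring is compatible with every listed relation $\inc{i}{j}{k}$. Suppose $c(i)=c(j)$, so $A_i\cup A_j$ lies in a single open half-plane $H$. Since $H$ is convex, $\conv(A_i\cup A_j)\subseteq H$. The relation $\inc{i}{j}{k}$ gives a point of $A_k$ in $\conv(A_i\cup A_j)$, hence a point of $A_k$ in $H$. As $A_k$ lies wholly in one half-plane, it lies in $H$, so $c(k)=c(i)$. This is exactly \eqref{eq:color-implication}.

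We have therefore produced a compatible two-coloring that is not constant, contradicting the hypothesis. Hence the family is NS.

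None of these steps is a real obstacle. The only points needing care are that each member lies strictly on one side of $\ell$, which uses convexity of the members and disjointness from $\ell$, and that a single point of $A_k$ in $H$ forces all of $A_k$ into $H$, which uses the first point again.
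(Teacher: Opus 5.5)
Your proposal is correct and follows essentially the same route as the paper: a separating line induces a non-constant coloring, and that coloring is compatible because the convex hull of two same-colored members stays in one open half-plane. You have written out the convexity details (each member lies in a single half-plane, and one point of $A_k$ in $H$ puts all of $A_k$ in $H$) that the paper leaves to the observation just before the lemma.
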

\begin{proof}
A separating line would give a compatible coloring with at least one
member of each color, by the preceding observation.
\end{proof}

The next test reduces a segment relation to one scalar inequality.
It is convenient both for exact rational examples and for interval
calculations with algebraic parameters.

\begin{lemma}[Coordinate test]\label{lem:positive-part}
For parameter vectors $x,y,z$ and $0\le\theta\le1$, the relation
$T(z)\cap(\theta T(x)+(1-\theta)T(y))\ne\varnothing$ is equivalent to
\begin{equation}\label{eq:incidence-M}
M(x,y,z;\theta):=\sum_{r=1}^3
\max\{\theta x_r+(1-\theta)y_r,z_r\}\le1.
\end{equation}
It is also equivalent to each of
\begin{align}
\sum_{r=1}^3(\theta x_r+(1-\theta)y_r-z_r)_+&\le t(z),
\label{eq:positive-part}\\
\sum_{r\in J}(\theta x_r+(1-\theta)y_r)+\sum_{r\notin J}z_r&\le1
\quad(J\subseteq\{1,2,3\}),\label{eq:subset-incidence}
\end{align}
where $a_+=\max\{a,0\}$.
\end{lemma}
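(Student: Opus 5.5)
The plan is to compute the Minkowski combination explicitly and then reduce the intersection with $T(z)$ to a componentwise-max condition. First, since $T(x)=x+t(x)\Sigma$ and $T(y)=y+t(y)\Sigma$, and $\Sigma$ is convex, we have
\[
\theta T(x)+(1-\theta)T(y)=w+\bigl(\theta t(x)+(1-\theta)t(y)\bigr)\Sigma,\qquad w:=\theta x+(1-\theta)y.
\]
Because $t(\cdot)=1-|\cdot|_1$ is affine, the scale on the right equals $t(w)=1-|w|_1$. Also $w\ge0$ and $|w|_1<1$. Hence the combination is exactly $T(w)=\{u\in\Sigma:u\ge w\}$, since every element of $w+t(w)\Sigma$ lies in $\Pi$ and is $\ge w$. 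When $\theta\in\{0,1\}$ this is simply $T(y)$ or $T(x)$, and the formula still holds.

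Next, note that $T(w)\cap T(z)=\{u\in\Pi:u\ge w,\ u\ge z\}=\{u\in\Pi:u\ge m\}$ with $m_r=\max\{w_r,z_r\}$. The key step is to show that this set is nonempty iff $|m|_1\le1$. If some $u$ lies in it, then $1=\sum u_r\ge\sum m_r$. Conversely, if $\sum m_r\le1$, take $u=m+(1-|m|_1)e_1$; it lies in $\Pi$ and satisfies $u\ge m\ge0$. Since $\sum_r m_r=M(x,y,z;\theta)$, this gives \eqref{eq:incidence-M}.

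For the remaining forms, write $\max\{w_r,z_r\}=z_r+(w_r-z_r)_+$. Summing over $r$ gives $M=|z|_1+\sum_r(w_r-z_r)_+$, so $M\le1$ is equivalent to $\sum_r(w_r-z_r)_+\le1-|z|_1=t(z)$, which is \eqref{eq:positive-part}. For \eqref{eq:subset-incidence}, observe that $M=\max_{J}\bigl(\sum_{r\in J}w_r+\sum_{r\notin J}z_r\bigr)$, because the maximum of a sum of independent coordinatewise choices is attained by choosing the larger term in each coordinate. So $M\le1$ holds iff every one of the eight subset sums is at most $1$.

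No step is a real obstacle. The only point needing care is the identification of the Minkowski combination with $T(w)$, which uses the affinity of $t$ and the fact that $\Sigma$ is convex, so that $a\Sigma+b\Sigma=(a+b)\Sigma$ for $a,b\ge0$.
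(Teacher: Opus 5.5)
Your proof is correct and follows the paper's argument step for step: you identify $\theta T(x)+(1-\theta)T(y)$ with $T(w)$, reduce nonemptiness of $T(w)\cap T(z)$ to $\sum_r\max\{w_r,z_r\}\le1$, subtract $|z|_1$, and expand the coordinatewise maxima over the eight subsets. You also supply details the paper leaves implicit, namely the affinity of $t$ on nonnegative vectors and the explicit witness $m+(1-|m|_1)e_1$.
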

\begin{proof}
Put $w=\theta x+(1-\theta)y$. The set
$\theta T(x)+(1-\theta)T(y)$ equals $T(w)$.
It meets $T(z)$ exactly when a point of $\Sigma$ is coordinatewise at least
both $w$ and $z$. Such a point exists precisely when
$\sum_r\max\{w_r,z_r\}\le1$. Subtracting $|z|_1$ gives
\eqref{eq:positive-part}. The sum of the coordinatewise maxima is also
the maximum over the eight subset expressions in
\eqref{eq:subset-incidence}, proving the last equivalence.
\end{proof}
The function $M$ is convex and piecewise linear in $\theta$. Its formula
can change only when one of the two entries in a maximum becomes equal
to the other. Therefore its minimum on $[0,1]$ occurs at an endpoint or
at one of the at most three values
\begin{equation}\label{eq:incidence-breakpoints}
\theta=\frac{z_r-y_r}{x_r-y_r}\in[0,1],\qquad x_r\ne y_r.
\end{equation}
For rational vectors, these finitely many rational evaluations decide
whether the segment relation holds. If a suitable rational weight is
already given, it suffices to check \eqref{eq:incidence-M} at that weight.

A single member has covering factor $1$. Two compact convex members form
an NS-family exactly when they intersect, by strict separation. If
$p\in F_1\cap F_2$, then $F_1\cup F_2\subset F_1+F_2-p$.
For positive homothets of one convex body, the latter is a homothet of
ratio $\tau_1+\tau_2$. Hence
\begin{equation}\label{eq:two-member-bound}
\Lambda_2\le1<\mu.
\end{equation}

\section{Three basic configurations}\label{sec:three-configurations}

We begin with four-, five-, and six-member families. In each case three
members support a smallest triangular container, and the remaining members
lie in its interior. The interior members enforce segment relations that
rule out a separating line. The six-member example consists of two groups
of three triangles, each obtained by repeatedly applying one affine symmetry
to a single triangle. It provides the first instance of the layered
construction developed below.

The candidates are selected by the contact and first-order equations
specified in Appendix~\ref{app:exact}. That appendix verifies their exact
parameters, geometric inequalities and covering factors.
Here we prove non-separability from the resulting segment relations.
In a relation \(\inc{i}{j}{k}\), some segment joining points of the two
source triangles \(T_i,T_j\) meets the target triangle \(T_k\).
As in Lemma~\ref{lem:color-criterion}, a compatible two-coloring assigns the
target the common color of the two sources whenever they have the same color.

\subsection{Four members}\label{subsec:four}

Let \(\mathcal F_4=(T(x^{(i)}))_{i=1}^4\), where the parameter vectors are defined
in Appendix~\ref{app:four}. They satisfy
\begin{equation}\label{eq:four-rules}
 \inc{1}{2}{3},\qquad \inc{2}{4}{3},\qquad \inc{1}{4}{2},
 \qquad \inc{2}{3}{1},\qquad \inc{2}{3}{4}.
\end{equation}
Every compatible two-coloring is constant. Indeed, if members 2 and 3 have
the same color, the last two rules force this color on members 1 and 4.
Otherwise the first two rules force members 1 and 4 to have the color of
member 3, contrary to the third rule. Lemma~\ref{lem:color-criterion}
therefore proves that \(\mathcal F_4\) is non-separable.

The three coordinate minima are zero. Hence \(\Sigma\) is a smallest
container by Lemma~\ref{lem:minimal-container}, and
\[
 \lambda(\mathcal F_4)=\left(\sum_{i=1}^4t(x^{(i)})\right)^{-1}
 \approx1.05331610414687.
\]
The strict rational bounds are given in \eqref{eq:four-value-isolation}.

\subsection{Five members}\label{subsec:five}

For the parameter vectors of Appendix~\ref{app:five}, put
\(\mathcal F_5=(T(x^{(i)}))_{i=1}^5\). The required segment relations are
\begin{equation}\label{eq:five-rules}
\begin{gathered}
 \inc{1}{4}{2},\quad \inc{4}{5}{3},\quad \inc{1}{5}{2},\quad
 \inc{1}{2}{3},\\
 \inc{3}{4}{1},\quad \inc{1}{2}{4},\quad \inc{2}{3}{5}.
\end{gathered}
\end{equation}
If members 1 and 2 have the same color, the rules with source pairs
\((1,2)\) and \((2,3)\) force a constant coloring. Otherwise the first rule
forces \(c(4)=c(2)\); the fifth forces \(c(3)=c(1)\); and the second forces
\(c(5)=c(1)\). The third rule is then violated. Thus every compatible
coloring is constant, and \(\mathcal F_5\) is non-separable.

For one explicit segment witness, let
\(y=\theta_1x^{(1)}+(1-\theta_1)x^{(4)}\). The defining equations give
\[
 y_1+x^{(2)}_{2}+x^{(2)}_{3}=1,
 \qquad y_1+y_2+x^{(2)}_{3}=1.
\]
Thus \(y_2=x^{(2)}_{2}\), \(y_1=x^{(2)}_{1}+t_2\), and comparison of the third
coordinates in \(\Pi\) yields
\begin{equation}\label{eq:five-worked-witness}
 x^{(2)}+t_2e_1
 =\theta_1(x^{(1)}+t_1e_3)+(1-\theta_1)(x^{(4)}+t_4e_3).
\end{equation}
This places a vertex of \(T_2\) on a segment joining vertices of \(T_1\)
and \(T_4\). The other six segment relations are verified in the same coordinate
form. The coordinate minima vanish, so
\[
 \lambda(\mathcal F_5)=\left(\sum_{i=1}^5t(x^{(i)})\right)^{-1}
 \approx1.0551900248209903;
\]
see \eqref{eq:five-value-isolation} for strict rational bounds.
Figure~\ref{fig:four-configuration} displays both constructions.

\begin{figure}[htbp]
\centering
\begin{minipage}[b]{.48\textwidth}\centering
\begin{tikzpicture}[x=4.8cm,y=4.8cm,line join=round,line cap=round]
  \coordinate (fpa) at (0.300691,0.407981);
  \coordinate (fqa) at (0.592019,0.407981);
  \coordinate (fra) at (0.300691,0.699309);
  \coordinate (fpb) at (0.000000,0.295998);
  \coordinate (fqb) at (0.331181,0.295998);
  \coordinate (frb) at (0.000000,0.627179);
  \coordinate (fpc) at (0.408671,0.312059);
  \coordinate (fqc) at (0.425878,0.312059);
  \coordinate (frc) at (0.408671,0.329266);
  \coordinate (fpd) at (0.411773,0.000000);
  \coordinate (fqd) at (0.721440,0.000000);
  \coordinate (frd) at (0.411773,0.309667);

  \draw[black!35,densely dashed,line width=.45pt]
    (fpa)--(fpd);
  \draw[black!55,dotted,line width=.55pt]
    (fqa)--(fqb) (fqb)--(fqc);
  \draw[black!55,dash pattern=on 3pt off 1.5pt on .6pt off 1.5pt,
        line width=.45pt]
    (frb)--(frd) (frb)--(frc);

  \filldraw[fill=orange!24,draw=orange!70!black,line width=.65pt]
    (fpa)--(fqa)--(fra)--cycle;
  \filldraw[fill=orange!24,draw=orange!70!black,line width=.65pt]
    (fpb)--(fqb)--(frb)--cycle;
  \filldraw[fill=blue!22,draw=blue!65!black,line width=.65pt]
    (fpc)--(fqc)--(frc)--cycle;
  \filldraw[fill=orange!24,draw=orange!70!black,line width=.65pt]
    (fpd)--(fqd)--(frd)--cycle;

  \fill (frc) circle[radius=.65pt];
  \fill (fpc) circle[radius=.65pt];
  \fill (fqb) circle[radius=.65pt];
  \fill (fpa) circle[radius=.65pt];
  \fill (frd) circle[radius=.65pt];

  \draw[line width=.8pt] (0,0)--(1,0)--(0,1)--cycle;
  \node[font=\small] at (0.385,0.535) {\(T_1\)};
  \node[font=\small] at (0.105,0.410) {\(T_2\)};
  \node[font=\small] at (0.545,0.085) {\(T_4\)};
  \draw[blue!65!black,thin,->]
    (0.505,0.365) node[right,font=\scriptsize] {\(T_3\)}
    -- (0.418,0.321);
\end{tikzpicture}
\par\small (a) Four members.
\end{minipage}\hfill
\begin{minipage}[b]{.48\textwidth}\centering
\begin{tikzpicture}[x=4.8cm,y=4.8cm,line join=round,line cap=round]
  \coordinate (vpa) at (0.000000,0.308668);
  \coordinate (vqa) at (0.305317,0.308668);
  \coordinate (vra) at (0.000000,0.613985);
  \coordinate (vpb) at (0.386876,0.325869);
  \coordinate (vqb) at (0.405562,0.325869);
  \coordinate (vrb) at (0.386876,0.344555);
  \coordinate (vpc) at (0.273710,0.423367);
  \coordinate (vqc) at (0.291481,0.423367);
  \coordinate (vrc) at (0.273710,0.441137);
  \coordinate (vpd) at (0.390375,0.000000);
  \coordinate (vqd) at (0.713638,0.000000);
  \coordinate (vrd) at (0.390375,0.323263);
  \coordinate (vpe) at (0.290721,0.426619);
  \coordinate (vqe) at (0.573381,0.426619);
  \coordinate (vre) at (0.290721,0.709279);

  \draw[black!35,densely dashed,line width=.45pt]
    (vpd)--(vpe) (vpc)--(vpd);
  \draw[black!55,dotted,line width=.55pt]
    (vqa)--(vqe) (vqa)--(vqb);
  \draw[black!55,dash pattern=on 3pt off 1.5pt on .6pt off 1.5pt,
        line width=.45pt]
    (vra)--(vrd) (vra)--(vrb) (vrb)--(vrc);

  \filldraw[fill=orange!24,draw=orange!70!black,line width=.65pt]
    (vpa)--(vqa)--(vra)--cycle;
  \filldraw[fill=blue!22,draw=blue!65!black,line width=.65pt]
    (vpb)--(vqb)--(vrb)--cycle;
  \filldraw[fill=blue!22,draw=blue!65!black,line width=.65pt]
    (vpc)--(vqc)--(vrc)--cycle;
  \filldraw[fill=orange!24,draw=orange!70!black,line width=.65pt]
    (vpd)--(vqd)--(vrd)--cycle;
  \filldraw[fill=orange!24,draw=orange!70!black,line width=.65pt]
    (vpe)--(vqe)--(vre)--cycle;

  \fill (vpb) circle[radius=.65pt];
  \fill (vqc) circle[radius=.65pt];
  \fill (vrb) circle[radius=.65pt];
  \fill (vpc) circle[radius=.65pt];
  \fill (vqa) circle[radius=.65pt];
  \fill (vrd) circle[radius=.65pt];
  \fill (vpe) circle[radius=.65pt];

  \draw[line width=.8pt] (0,0)--(1,0)--(0,1)--cycle;
  \node[font=\small] at (0.090,0.435) {\(T_1\)};
  \node[font=\small] at (0.535,0.085) {\(T_4\)};
  \node[font=\small] at (0.380,0.560) {\(T_5\)};
  \draw[blue!65!black,thin,->]
    (0.500,0.285) node[right,font=\scriptsize] {\(T_2\)}
    -- (0.397,0.335);
  \draw[blue!65!black,thin,->]
    (0.185,0.475) node[left,font=\scriptsize] {\(T_3\)}
    -- (0.282,0.432);
\end{tikzpicture}
\par\small (b) Five members.
\end{minipage}
\caption{The four- and five-member families in the planar image
\(\Delta\) of \(\Sigma\). Orange members support the container; blue
members lie in its interior. The line styles distinguish the three source
vertex types, and dots mark the targets of the segment witnesses.
Coordinates are rounded only for drawing.}
\label{fig:four-configuration}\label{fig:five-configuration}
\end{figure}

\subsection{Six members}\label{subsec:six}

Put \(R(x,y)=(1-x-y,x)\). This affine map has order three and preserves
\(\Delta\). Let \(a,b,X,Y,g\) and \(p_*,v_*,w_*\) be the isolated
algebraic parameters of Appendix~\ref{app:six}. In auxiliary planar
coordinates define
\begin{equation}\label{eq:six-triangles}
 T_1=(1+a,-b)+b\Delta,\qquad T_4=(X,Y)+g\Delta,
\end{equation}
and take their cyclic images
\(T_2=R(T_1)\), \(T_3=R^2(T_1)\),
\(T_5=R(T_4)\), and \(T_6=R^2(T_4)\).

Label the vertices equivariantly, starting with
\[
\begin{aligned}
 p_1&=(1+a,-b),&q_1&=(1+a+b,-b),&r_1&=(1+a,0),\\
 p_4&=(X,Y),&q_4&=(X+g,Y),&r_4&=(X,Y+g).
\end{aligned}
\]
Thus each vertex label is transported by \(R\); it does not denote a fixed
coordinate direction after rotation. The defining equations give
\begin{equation}\label{eq:six-basic-incidences}
 r_5=w_*q_2+(1-w_*)r_1,\qquad
 p_1=(1-p_*)p_6+p_*r_4,\qquad
 r_2=(1-v_*)p_6+v_*r_1.
\end{equation}
All three weights lie in \((0,1)\). Their cyclic images give
\begin{align}\label{eq:six-nine-rules}
 \inc{4}{6}{1},&\quad \inc{4}{5}{2},&\quad \inc{5}{6}{3},\nonumber\\
 \inc{1}{6}{2},&\quad \inc{2}{4}{3},&\quad \inc{3}{5}{1},\\
 \inc{1}{3}{4},&\quad \inc{1}{2}{5},&\quad \inc{2}{3}{6}.\nonumber
\end{align}
The list is invariant under \((1\,2\,3)(4\,5\,6)\). If members 4, 5,
and 6 have one color, the first row forces a constant coloring. Otherwise
rotate the indices so that \(c(4)=c(5)\ne c(6)\). The rules
\(\inc{4}{5}{2}\), \(\inc{2}{4}{3}\), and \(\inc{2}{3}{6}\), in that
order, give a contradiction. Thus \(\mathcal F_6=(T_i)_{i=1}^6\) is
non-separable.

The smallest container is
\begin{equation}\label{eq:six-container}
 \mathcal K_6=(Y,Y)+(1-3Y)\Delta.
\end{equation}
Containment follows from the inequalities in Appendix~\ref{app:six}.
For minimality, the edge \([p_4,q_4]\) and its cyclic images lie on
\(y=Y\), \(x+y=1-Y\), and \(x=Y\). Any containing homothet
\((\alpha,\beta)+s\Delta\) therefore satisfies
\[
 \alpha\le Y,\qquad\beta\le Y,
 \qquad\alpha+\beta+s\ge1-Y,
\]
whence \(s\ge1-3Y\). Consequently
\begin{equation}\label{eq:six-value}
 \lambda(\mathcal F_6)=\lambda_*=\frac{1-3Y}{3(b+g)}
 \approx1.0572061136.
\end{equation}
Its strict rational enclosure is \eqref{eq:six-value-isolation}.

To put the construction in the common normalized container, apply
\[
 \Phi_Y(x,y)=\left(\frac{x-Y}{1-3Y},\frac{y-Y}{1-3Y}\right).
\]
This map commutes with \(R\) and preserves all the segment identities. The
two normalized generators are
\begin{equation}\label{eq:six-normalized-generators}
\begin{aligned}
 \widehat T_1&=\left(\frac{1+a-Y}{1-3Y},\frac{-b-Y}{1-3Y}\right)
                         +\frac b{1-3Y}\Delta,\\
 \widehat T_4&=\left(\frac{X-Y}{1-3Y},0\right)
                         +\frac g{1-3Y}\Delta.
\end{aligned}
\end{equation}
Numerically they are
\[
\begin{aligned}
 \widehat T_1&\approx(0.4011380774,0.2986580018)+0.0193641630\Delta,\\
 \widehat T_4&\approx(0.4048255232,0)+0.2959322859\Delta.
\end{aligned}
\]
The three images of \(\widehat T_4\) support the container, and the three
images of \(\widehat T_1\) lie in its interior. Each triple is an orbit: it
consists of one triangle and its two images under \(R\) and \(R^2\).
We call them the support orbit and the bridge orbit, respectively, and
extend this geometry to cyclic chains.

\begin{figure}[htbp]
\centering
\begin{tikzpicture}[x=8.2cm,y=8.2cm,line join=round,line cap=round]
  \coordinate (p1) at (0.401138,0.298658);
  \coordinate (q1) at (0.420502,0.298658);
  \coordinate (r1) at (0.401138,0.318022);
  \coordinate (p2) at (0.300204,0.401138);
  \coordinate (q2) at (0.280840,0.420502);
  \coordinate (r2) at (0.280840,0.401138);
  \coordinate (p3) at (0.298658,0.300204);
  \coordinate (q3) at (0.298658,0.280840);
  \coordinate (r3) at (0.318022,0.280840);
  \coordinate (p4) at (0.404826,0);
  \coordinate (q4) at (0.700758,0);
  \coordinate (r4) at (0.404826,0.295932);
  \coordinate (p5) at (0.595174,0.404826);
  \coordinate (q5) at (0.299242,0.700758);
  \coordinate (r5) at (0.299242,0.404826);
  \coordinate (p6) at (0,0.595174);
  \coordinate (q6) at (0,0.299242);
  \coordinate (r6) at (0.295932,0.299242);

  \draw[black!35,densely dashed,line width=.45pt]
    (p6)--(r4) (p4)--(r5) (p5)--(r6);
  \draw[black!55,dotted,line width=.55pt]
    (p6)--(r1) (p4)--(r2) (p5)--(r3);
  \draw[black!55,dash pattern=on 3pt off 1.5pt on .6pt off 1.5pt,
        line width=.45pt]
    (q1)--(r3) (q2)--(r1) (q3)--(r2);
  \filldraw[fill=orange!24,draw=orange!70!black,line width=.65pt]
    (p4)--(q4)--(r4)--cycle;
  \filldraw[fill=orange!24,draw=orange!70!black,line width=.65pt]
    (p5)--(q5)--(r5)--cycle;
  \filldraw[fill=orange!24,draw=orange!70!black,line width=.65pt]
    (p6)--(q6)--(r6)--cycle;
  \filldraw[fill=blue!22,draw=blue!65!black,line width=.65pt]
    (p1)--(q1)--(r1)--cycle;
  \filldraw[fill=blue!22,draw=blue!65!black,line width=.65pt]
    (p2)--(q2)--(r2)--cycle;
  \filldraw[fill=blue!22,draw=blue!65!black,line width=.65pt]
    (p3)--(q3)--(r3)--cycle;
  \draw[line width=.8pt] (0,0)--(1,0)--(0,1)--cycle;
  \draw[blue!65!black,thin,->]
    (0.49,0.35) node[right,font=\scriptsize] {\(T_1\)}
    -- (0.407,0.309);
  \draw[blue!65!black,thin,->]
    (0.305,0.505) node[left,font=\scriptsize] {\(T_2\)}
    -- (0.288,0.408);
  \draw[blue!65!black,thin,->]
    (0.37,0.21) node[below,font=\scriptsize] {\(T_3\)}
    -- (0.307,0.289);
  \node[font=\small] at (0.535,0.085) {\(T_4\)};
  \node[font=\small] at (0.385,0.535) {\(T_5\)};
  \node[font=\small] at (0.085,0.385) {\(T_6\)};

  \draw[black!50,densely dashed,line width=.4pt]
    (0.387,0.287) rectangle (0.433,0.333);
  \node[font=\scriptsize] at (0.85,1.085)
    {Detail of \(\inc{4}{6}{1}\)};
  \begin{scope}
    \clip (0.62,0.59) rectangle (1.08,1.05);
    \fill[white] (0.62,0.59) rectangle (1.08,1.05);
    \filldraw[fill=orange!24,draw=orange!70!black,line width=.7pt]
      (0.798255232,-2.28)--(3.757578091,-2.28)
      --(0.798255232,0.679322859)--cycle;
    \filldraw[fill=blue!22,draw=blue!65!black,line width=.7pt]
      (0.761380774,0.706580018)--(0.955022404,0.706580018)
      --(0.761380774,0.900221648)--cycle;
    \draw[black!70,densely dashed,line width=.65pt]
      (-3.25,3.671744768)--(0.798255232,0.679322859);
    \draw[black!70,->,line width=.65pt]
      (0.675,0.770430)--(0.625,0.807388);
  \end{scope}
  \draw[black!60,line width=.5pt]
    (0.62,0.59) rectangle (1.08,1.05);
  \fill[blue!65!black] (0.761380774,0.706580018) circle[radius=1.2pt];
  \fill[orange!70!black] (0.798255232,0.679322859) circle[radius=1.2pt];
  \node[font=\scriptsize,anchor=east] at (0.61,0.84)
    {towards \(\widehat p_6\)};
  \node[font=\scriptsize,anchor=north east] at (0.750,0.696)
    {\(\widehat p_1\)};
  \draw[orange!70!black,line width=.4pt]
    (0.995,0.700)--(0.798255232,0.679322859);
  \node[font=\scriptsize,anchor=north east] at (1.055,0.726)
    {\(\widehat r_4\)};
  \node[font=\scriptsize] at (0.827,0.775) {\(\widehat T_1\)};
  \node[font=\scriptsize] at (0.963,0.63) {\(\widehat T_4\)};
\end{tikzpicture}
\caption{The six-member configuration after its smallest container is
normalized to \(\Delta\).  The orange members \(T_4,T_5,T_6\) support its
three sides, and the blue members \(T_1,T_2,T_3\) lie in its interior.
The three line styles represent the three \(R\)-orbits of segment witnesses
in \eqref{eq:six-nine-rules}.  The inset enlarges the marked window tenfold
and shows \(\widehat p_1=(1-p_*)\widehat p_6+p_*\widehat r_4\), which
witnesses \(\inc{4}{6}{1}\).  The segment continues towards
\(\widehat p_6\), outside the inset.}
\label{fig:six-configuration}
\end{figure}

\section{A rational nine-member improvement}\label{sec:nine}

The nine-member construction has three supporting members and two groups
of three bridges. Each group is invariant under the same cyclic symmetry.
The following rational choice permits a short verification of all
required segment relations. Define
\begin{equation}\label{eq:cyclic-map}
\rho(x_1,x_2,x_3)=(x_3,x_1,x_2)
\end{equation}
and take
\begin{equation}\label{eq:nine-vectors}
a=\frac{(2908,4032,2971)}{10000},\quad
b=\frac{(2815,4003,3079)}{10000},\quad
c=\frac{(2992,4048,0)}{10000}.
\end{equation}
Let $A_i=T(\rho^ia)$, $B_i=T(\rho^ib)$ and $C_i=T(\rho^ic)$,
where here and in subsequent cyclic constructions $i$ is read modulo $3$.

\begin{proposition}\label{prop:nine}
The family $\mathcal G_9=(A_i,B_i,C_i)_{i=0}^2$ is NS and has covering factor
\begin{equation}\label{eq:nine-factor}
\lambda(\mathcal G_9)=\frac{625}{591}>\lambda(\mathcal F_6).
\end{equation}
\end{proposition}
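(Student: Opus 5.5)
The plan is to treat the covering factor and non-separability separately, both by exact rational arithmetic on the parameter vectors \eqref{eq:nine-vectors}.

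\emph{Covering factor.} First I check that every $\rho^i a,\rho^i b,\rho^i c$ has nonnegative entries and coordinate sum below $1$, so that all nine triangles are well defined and lie in $\Sigma$. The scales are
\[
t(a)=\tfrac{89}{10000},\qquad t(b)=\tfrac{103}{10000},\qquad t(c)=\tfrac{2960}{10000},
\]
and $\rho$ preserves $|\cdot|_1$. Hence
\[
S=\frac{3(89+103+2960)}{10000}=\frac{9456}{10000}.
\]
Since $c_3=0$, the vectors $c,\rho c,\rho^2 c$ have a zero in each of the three coordinates. All entries are nonnegative, so every coordinate minimum is $0$. Lemma~\ref{lem:minimal-container} then shows that $\Sigma$ is a smallest container, and \eqref{eq:normalized-cost} gives
\[
\lambda(\mathcal G_9)=\frac{10000}{9456}=\frac{625}{591}.
\]
The strict inequality $\lambda(\mathcal G_9)>\lambda(\mathcal F_6)$ follows by comparing $625/591\approx1.05753$ with the rational upper enclosure \eqref{eq:six-value-isolation} of $\lambda(\mathcal F_6)\approx1.0572061$.

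\emph{Non-separability.} I use Lemma~\ref{lem:color-criterion} with a $\rho$-invariant list of segment relations, modelled on the three rows of \eqref{eq:six-nine-rules}. The six-member pattern was: two supports produce a bridge, a bridge and a support produce the next bridge, and two bridges produce a support. With two bridge layers I expect the analogous chain:
\begin{itemize}
\item two supports produce an outer bridge, $\inc{C_{i}}{C_{i+2}}{B_i}$ (possibly with the roles of $A$ and $B$ swapped);
\item the outer bridge and a support produce the inner bridge, $\inc{B_i}{C_{i+2}}{A_{i+1}}$;
\item two inner bridges produce a support, $\inc{A_i}{A_{i+1}}{C_{j}}$.
\end{itemize}
I would also include cross relations between the two bridge orbits, as needed to close the cycle.

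The exact rules are found and certified by Lemma~\ref{lem:positive-part}. For each candidate triple, $M(x,y,z;\theta)$ is convex and piecewise linear in $\theta$, so I evaluate it at $\theta\in\{0,1\}$ and at the breakpoints \eqref{eq:incidence-breakpoints}. All of these are rationals with small denominators. Because the list is $\rho$-invariant, only one representative per $\rho$-orbit needs checking, and $\rho$ merely permutes coordinates in \eqref{eq:incidence-M}.

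The coloring argument then copies the six-member case. If $C_0,C_1,C_2$ share a color, the support-to-bridge rules propagate that color to every $B_i$ and then to every $A_i$. Otherwise I rotate indices so that exactly one support has the odd color, say $c(C_0)=c(C_1)\ne c(C_2)$. From there I follow a chain through $B$ and $A$ members that ends with a rule whose target is forced to carry both colors, a contradiction.

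The main obstacle is choosing the relation list. The entries of \eqref{eq:nine-vectors} were tuned so that several relations hold with near-equality, and only those near-contacts are certifiable. I would therefore first tabulate $\min_\theta M$ over all $\rho$-orbit representatives of ordered triples from the nine members and keep those with value $\le1$. I would then run a small closure search: for every nonconstant two-coloring up to $\rho$, check whether some kept rule is violated. Once a sufficient subset is found, I would present it together with explicit rational witnesses $\theta$, in the same spirit as \eqref{eq:five-worked-witness}.
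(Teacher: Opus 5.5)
Your covering-factor computation is the paper's and is correct. The gap is in non-separability. You never commit to a list of segment relations, and the templates you do write down mostly fail for the data \eqref{eq:nine-vectors}. Evaluating $M$ at its breakpoints gives $\min_\theta M\approx1.0086$ for $\inc{C_0}{C_2}{B_0}$, $\approx1.0092$ for $\inc{B_0}{C_2}{A_1}$, and $\approx1.0070$ for the swapped $\inc{A_0}{C_2}{B_1}$.

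The closing step ``two inner bridges produce a support'' fails in either reading. For the pair $\{A_0,A_1\}$, the minima with targets $C_0,C_1,C_2$ are $1.0011$, about $1.0006$ and $1.0988$. For $\{B_0,B_2\}$ they are about $1.0080$, $1.0866$ and $1.0119$. All pairs within one orbit are $\rho$-equivalent, so no relation $\inc{A_i}{A_j}{C_k}$ or $\inc{B_i}{B_j}{C_k}$ holds. Of your three templates, only the swapped first one, $\inc{C_i}{C_{i-1}}{A_i}$, is true. The NS half therefore rests on a search you have not carried out, and that search is the entire content of the claim. A procedure for looking for relations does not show that a sufficient set exists.

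What is needed is an explicit certified list, such as the paper's. It uses the $\rho$-orbits of $\inc{B_0}{B_2}{A_0}$, $\inc{C_0}{C_2}{A_0}$, $\inc{A_0}{B_2}{C_0}$, $\inc{A_0}{C_2}{B_0}$ and $\inc{B_0}{C_2}{B_1}$. Each is checked at a single weight $\theta=12/13$, $211/212$, $12/13$, $113/114$, $26/37$, where $M$ equals $129983/130000$, $529999/530000$, $12999/13000$, $1$ and $23124/23125$ respectively.

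Your template is missing three ideas:
\begin{itemize}
\item a \emph{mixed} pair producing a support, $\inc{A_i}{B_{i-1}}{C_i}$;
\item the relation $\inc{A_i}{C_{i-1}}{B_i}$, with target in the same copy rather than shifted;
\item the cross-copy relation $\inc{B_i}{C_{i-1}}{B_{i+1}}$.
\end{itemize}

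With these, your case structure does close. If all $C_i$ agree, $\inc{C_i}{C_{i-1}}{A_i}$ and then $\inc{A_i}{C_{i-1}}{B_i}$ spread the color. If $C_0,C_1$ are red and $C_2$ is blue, apply successively $\inc{C_1}{C_0}{A_1}$, $\inc{A_1}{C_0}{B_1}$, $\inc{B_1}{C_0}{B_2}$, $\inc{B_2}{B_1}{A_2}$ and $\inc{A_2}{B_1}{C_2}$. These make $A_1,B_1,B_2,A_2$ and finally $C_2$ red, a contradiction. The paper instead splits on the colors of $B_0,B_1,B_2$; either case analysis works once the relations are in hand.
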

\begin{proof}
Each member in the $A$-, $B$- and $C$-groups has scale, respectively, $89/10000$, $103/10000$ and $2960/10000$.
The three supporting members $C_i$ give zero in each coordinate, so the container
is $\Sigma$ and the total scale is $591/625$.
All nine members are distinct: within each cyclic group the coordinates are
different, and the three groups have different member scales.

Table~\ref{tab:nine-incidences} verifies five segment relations using
\eqref{eq:incidence-M}.  Applying $\rho$ once and twice to each relation gives
\begin{equation}\label{eq:nine-rules}
\begin{aligned}
\inc{B_i}{B_{i-1}}{A_i},&\qquad \inc{C_i}{C_{i-1}}{A_i},\\
\inc{A_i}{B_{i-1}}{C_i},&\qquad \inc{A_i}{C_{i-1}}{B_i},\\
&\inc{B_i}{C_{i-1}}{B_{i+1}}.
\end{aligned}
\end{equation}
Suppose first that $B_0,B_1,B_2$ have one color.  The first and third rules
then give that color to every $A_i$ and every $C_i$.
Otherwise, rotate the labels and interchange the colors to make
$B_0,B_1$ red and $B_2$ blue.  The last rule at $i=1,2$ forces $C_0$ blue
and $C_1$ red.  If $C_2$ is blue, the second rule at $i=0$ makes $A_0$
blue, and the fourth makes $B_0$ blue.  If $C_2$ is red, those same rules
at $i=2$ make $A_2$ and $B_2$ red.  Both cases contradict the assigned
colors.  Lemma~\ref{lem:color-criterion} proves NS.

Finally, the upper bound for $\lambda(\mathcal F_6)$ in
\eqref{eq:six-value-isolation} gives the exact comparison
\[
\frac{625}{591}-\frac{105720611360}{10^{11}}
=\frac{119491789}{369375000000}>0.
\]
\end{proof}

\begin{table}[htbp]
\centering
\caption{Exact incidences for the nine-member family.  The listed $\theta$ is the coefficient of a point in the
first member named in each row. Cyclic copies supply all fifteen relations.}
\label{tab:nine-incidences}
\begin{tabular}{ccc}
\toprule Incidence & $\theta$ & $M$\\\midrule
$\inc{B_0}{B_2}{A_0}$&$12/13$&$129983/130000$\\
$\inc{C_0}{C_2}{A_0}$&$211/212$&$529999/530000$\\
$\inc{A_0}{B_2}{C_0}$&$12/13$&$12999/13000$\\
$\inc{A_0}{C_2}{B_0}$&$113/114$&$1$\\
$\inc{B_0}{C_2}{B_1}$&$26/37$&$23124/23125$\\\bottomrule
\end{tabular}
\end{table}

Figure~\ref{fig:nine} shows the family and one segment witnessing each
of the five relations. The marked points may lie in edges or interiors;
three vertices need not be collinear.

\begin{figure}[htbp]
\centering
\input{figures/nine-member}
\caption{The nine-member family and its five basic segment relations.
Supports are gray, $A$-members orange, and $B$-members blue. Panels
(b)--(e) enlarge the lower boxed region in (a), and (f) the upper region.
Purple segments join points in the named source members; black dots lie
in the target. Some source endpoints lie outside the enlarged panels.
The remaining ten relations follow by cyclic permutation.}
\label{fig:nine}
\end{figure}

Thus six bridges arranged in two cyclic groups give a larger covering
factor than the three bridges of $\mathcal F_6$. We next give segment
relations and a recurrence that allow each group to be replaced by a
chain of arbitrary finite length.

\section{Cyclic contact chains}\label{sec:chains}\label{sec:chain}

Keep the three supporting triangles fixed and arrange the bridges in
three cyclic copies of an ordered list. We call this list a \emph{chain};
its order specifies which triangles occur together in the segment
relations below. We first prove non-separability for any list length.
We then choose vertices on the required segments by a recurrence whose
condition on the last triangle reduces to two products of scalar factors.

\subsection{The segment relations}

We first give a sufficient condition for an abstract indexed family of
convex sets. In the construction below, $C_i$ will be the three supports
and $P_{k,i}$ the $k$th bridge in the $i$th cyclic copy. No coordinates
or symmetry are assumed in the lemma.

\begin{lemma}\label{lem:chain-ns}
Let $m\ge1$, let $P_{k,i}$ be nonempty convex sets for
$0\le k\le m$ and $i\in\mathbb Z/3\mathbb Z$, and put $C_i=P_{0,i}$.
The family $(P_{k,i})$ is non-separable if
\begin{align}
 \inc{P_{k-1,i}}{C_{i-1}}{P_{k,i}},\qquad
 \inc{P_{k,i}}{P_{m,i-1}}{P_{k-1,i}}
 &\quad(1\le k\le m),                                  \label{eq:chain-incidences}\\
 \inc{P_{m,i}}{C_{i-1}}{P_{m,i+1}}
 &                                                        \label{eq:chain-terminal-incidence}
\end{align}
hold for every $i\in\mathbb Z/3\mathbb Z$.
\end{lemma}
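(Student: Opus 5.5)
The plan is to apply Lemma~\ref{lem:color-criterion}: fix a coloring $c$ compatible with all relations \eqref{eq:chain-incidences} and \eqref{eq:chain-terminal-incidence}, and show that $c$ is constant. First I will derive two propagation facts, one from each relation in \eqref{eq:chain-incidences}.

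\emph{Upward propagation.} Suppose $c(C_{i-1})=c(C_i)$. Induction on $k$ with the first relation in \eqref{eq:chain-incidences} gives $c(P_{k,i})=c(C_i)$ for all $0\le k\le m$: the sources $P_{k-1,i}$ and $C_{i-1}$ share a color, hence so does the target $P_{k,i}$.

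\emph{Downward propagation.} Suppose $c(P_{m,i-1})=c(P_{m,i})$. Descending induction on $k$ with the second relation gives $c(P_{k-1,i})=c(P_{m,i})$ for $k=m,\dots,1$. Hence the whole $i$th chain, including $C_i=P_{0,i}$, has the color of $P_{m,i}$.

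Now I look at the three terminal members $P_{m,0},P_{m,1},P_{m,2}$. By pigeonhole, two of them are consecutive modulo $3$ and share a color. After relabeling and swapping colors, I may assume $P_{m,i-1}$ and $P_{m,i}$ are red.

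If $P_{m,i+1}$ is also red, every consecutive pair of terminals agrees. Downward propagation then applies to each of the three chains, and the coloring is constant.

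Otherwise $P_{m,i+1}$ is blue, and I derive a contradiction in four steps.
\begin{itemize}
\item Downward propagation for the pair $(i-1,i)$ makes the $i$th chain red, so $C_i$ is red.
\item The terminal relation \eqref{eq:chain-terminal-incidence} at index $i$ has sources $P_{m,i}$ (red) and $C_{i-1}$, and target $P_{m,i+1}$ (blue). So $C_{i-1}$ must be blue.
\item If $C_{i+1}$ were red, upward propagation for the chain $i+1$ (using $c(C_i)=c(C_{i+1})$) would make $P_{m,i+1}$ red, which is false. So $C_{i+1}$ is blue.
\item Then $C_{i-2}=C_{i+1}$ and $C_{i-1}$ are both blue. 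Upward propagation for the chain $i-1$ makes $P_{m,i-1}$ blue, contradicting the assumption that it is red.
\end{itemize}
So every compatible coloring is constant, and Lemma~\ref{lem:color-criterion} gives non-separability.

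The only delicate point is choosing which propagation to apply in the mixed case, and keeping the indices modulo $3$ straight, in particular the identity $C_{i-2}=C_{i+1}$. The two inductions themselves are routine.
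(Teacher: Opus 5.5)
Your proof is correct. It uses the same three propagation rules as the paper, but you organize the cases differently. You split on the colors of the terminal members $P_{m,0},P_{m,1},P_{m,2}$, whereas the paper splits on the colors of the supports $C_0,C_1,C_2$.

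In the paper's monochromatic case, upward propagation alone finishes. In its mixed case ($C_0,C_1$ red, $C_2$ blue), there is a single forward chain of three implications:
\begin{itemize}
\item upward propagation on chain $1$ makes $P_{m,1}$ red;
\item the terminal relation $\inc{P_{m,1}}{C_0}{P_{m,2}}$, whose sources are both red, makes $P_{m,2}$ red;
\item downward propagation on chain $2$, with $P_{m,1}$ as the red source throughout, makes $C_2$ red, a contradiction.
\end{itemize}

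With your terminal-based split, the monochromatic case is closed by downward propagation instead. The mixed case needs an extra sub-case on $C_{i+1}$, and it uses the terminal relation and upward propagation contrapositively. So your route is slightly longer than the paper's direct chain, but every step checks, including the index bookkeeping $C_{i-2}=C_{i+1}$.
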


\begin{proof}
Suppose that a line disjoint from the family separates it, and color each
member according to its open half-plane. Each segment relation forces its target
to have the color of its sources whenever those sources have the same
color. If all three supports have one color, the first relations in
\eqref{eq:chain-incidences}, applied successively in $k$, give that color
to every member.

Otherwise, after cyclic relabeling and interchanging the colors, we may
assume that $C_0,C_1$ are red and $C_2$ is blue. The first relations make
every $P_{k,1}$ red. Equation~\eqref{eq:chain-terminal-incidence} with
$i=1$ then makes $P_{m,2}$ red. Applying the second relations with $i=2$
and $k=m,m-1,\ldots,1$, using the red source $P_{m,1}$ throughout, makes
$C_2$ red. Both cases contradict separation.
\end{proof}

The criterion uses $6m+3$ segment relations, independently of the
coordinates of the members.

\subsection{Choosing vertices on the required segments}

We now give coordinates for the symmetric construction. Put
\[
 \mathbf p^{(0)}=(c,d,0),\qquad
 \mathbf p^{(k)}=(x_k,y_k,z_k),\qquad
 P_{k,i}=T(\rho^i\mathbf p^{(k)}).
\]
Here $x_k,y_k,z_k$ are the three scalar coordinates of the vector
$\mathbf p^{(k)}$; its triangle has ratio $1-x_k-y_k-z_k$.
We prescribe the vertex
$V_3(\mathbf p^{(m)})=(u,v,1-u-v)$ of the last triangle. Equivalently,
its first two coordinates must satisfy $(x_m,y_m)=(u,v)$.
Its ratio will be determined by the last recurrence step. Write
\begin{equation}\label{eq:chain-anchor-scalars}
 \kappa=1-u-v,\qquad D=v-d.
\end{equation}
The vertices used at every step are
\[
 \mathbf q=V_3(\rho^{-1}\mathbf p^{(0)})=(d,0,1-d),\qquad
 \mathbf b=V_2(\rho^{-1}\mathbf p^{(m)})=(v,\kappa,u).
\]
Thus $\mathbf q$ belongs to the support $C_{-1}$, and $\mathbf b$ belongs
to the last triangle $P_{m,-1}$. Although that triangle is still to be
constructed, this vertex is fixed by $u,v$ and does not depend on $z_m$.
We refer to these two fixed vertices as the \emph{anchors}.

For one recurrence step, write $\mathbf p=(x,y,z)$ for the input and
$\mathbf p^+=(x^+,y^+,z^+)$ for the output. The superscript $+$ denotes
the next value of a quantity. In the copy with $i=0$, we require
\begin{equation}\label{eq:chain-vertex-contacts}
 \begin{aligned}
 V_1(\mathbf p^+)&=\theta V_3(\mathbf p)+(1-\theta)\mathbf q,\\
 V_3(\mathbf p)&=\eta V_2(\mathbf p^+)+(1-\eta)\mathbf b,
 \end{aligned}
\end{equation}
The parameters $\theta,\eta\in[0,1]$ specify positions on the two
segments. We call these vertex-on-segment identities \emph{contacts}.
They give the two segment relations in \eqref{eq:chain-incidences}; applying
$\rho$ gives the other copies. We shall obtain weights strictly between
zero and one.

To solve \eqref{eq:chain-vertex-contacts}, introduce the two coordinate
differences that occur in both identities:
\[
 A=1-u-x-y,\qquad B=x+y-d,\qquad \delta=1-\theta.
\]
The first identity gives $y^+=(1-\delta)y$ and
$z^+=1-x-y+\delta B$. The second gives
$x^+=v+(x-v)/\eta$ and $z^+=u+A/\eta$.
Equating the two expressions for $z^+$ yields
$(\eta^{-1}-1)A=\delta B$. Thus, whenever the denominators are nonzero,
\begin{equation}\label{eq:chain-delta-update}
 \begin{aligned}
 x^+&=x+\delta\frac{B(x-v)}{A},&
 y^+&=(1-\delta)y,&
 z^+&=1-x-y+\delta B,\\
 \theta&=1-\delta,&
 \eta&=\frac{A}{A+\delta B}.&&
 \end{aligned}
\end{equation}
Subtracting the sum of the output coordinates from one gives its scale:
\begin{equation}\label{eq:chain-delta-scale}
 t^+=\frac{\delta}{A}\bigl[\kappa(d-x)+Dy\bigr].
\end{equation}
The previous coordinate $z$ does not enter the recurrence because
$V_3(\mathbf p)=(x,y,1-x-y)$.

We next change variables so that successive steps can be combined by
multiplication. This will let us impose the prescribed last vertex without
solving for all intermediate triangles at once. The bracketed expression in
\eqref{eq:chain-delta-scale} is the scalar $E$ below; dividing by
$E$ gives the two new coordinates $R,Y$:
\begin{equation}\label{eq:chain-transform}
 \begin{gathered}
 r=v-x,\qquad L=r-y,\qquad A=\kappa+L,\qquad B=D-L,\\
 E=\kappa r+D(y-\kappa)=\kappa(d-x)+Dy,\qquad
 R=\frac rE,\qquad Y=\frac yE.
 \end{gathered}
\end{equation}
Replace the segment parameter $\delta$ by a step parameter $h$ through
$\delta=hA/(1+hL)$. Substitution in
\eqref{eq:chain-delta-update} gives
\begin{equation}\label{eq:chain-h-update}
 r^+=\frac{r(1+Dh)}{1+hL},\qquad
 y^+=\frac{y(1-\kappa h)}{1+hL},\qquad
 E^+=\frac{E}{1+hL}.
\end{equation}
The last equality follows by inserting the first two into the definition
of $E^+$. The common denominator cancels in $R^+=r^+/E^+$ and $Y^+=y^+/E^+$, giving
\begin{equation}\label{eq:chain-multiplicative}
 R^+=R(1+Dh),\qquad Y^+=Y(1-\kappa h).
\end{equation}
To recover the next triangle from $R^+,Y^+$, use
$\kappa R+DY-1=D\kappa/E$ and the scale formula:
\begin{equation}\label{eq:chain-recovery}
 \begin{gathered}
 E^+=\frac{D\kappa}{\kappa R^++DY^+-1},\qquad
 x^+=v-E^+R^+,\qquad y^+=E^+Y^+,\\
 t^+=hE^+,\qquad z^+=1-x^+-y^+-t^+.
 \end{gathered}
\end{equation}
The next triangle has scale $hE^+$. For the recurrence to give valid
triangles and segment intersections, its denominators must be nonzero,
its parameter vectors nonnegative, its
ratios positive, and its weights in $[0,1]$. We call such a construction
\emph{admissible}. The next lemma derives these requirements from the
initial values and two inequalities at the last step.
Subscripts on the scalar quantities record the step number: $E_k$ is
the value of $E$ after step $k$, and $h_k$ is that step's parameter.

\subsection{Controlling all triangles from the last step}

\begin{lemma}\label{lem:chain-endpoint}
Suppose that $\kappa>0$, $D<0$, $\kappa+D>0$, and $u\ge0$, with
$v=1-u-\kappa$. Start \eqref{eq:chain-h-update} at values satisfying
$r,y,E,A>0$, and take finitely many steps $0<h_k<1/\kappa$.
All updates are defined and preserve these four inequalities. If their
final values satisfy $B_m>0$ and $x_m=v-r_m\ge0$, then $B>0$ and $x\ge0$
at every intermediate step. Under these final conditions, every output in
\eqref{eq:chain-recovery} is a triangle of positive scale with
nonnegative coordinates, and both weights in
\eqref{eq:chain-vertex-contacts} lie strictly between zero and one.
\end{lemma}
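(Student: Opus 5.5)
The plan is to follow the four scalar quantities $r,y,E,A$ through one step of \eqref{eq:chain-h-update}, and then to express the conditions on $B$ and $x$ so that they become monotone along the chain. At that point the final-step hypotheses propagate backwards, and the remaining claims reduce to sign checks in \eqref{eq:chain-delta-update} and \eqref{eq:chain-recovery}.

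\emph{Step 1: the four positivity conditions persist.} Assume $r,y,E,A>0$ before a step with $0<h<1/\kappa$.
\begin{itemize}
\item Since $A=\kappa+L>0$, we have $L>-\kappa$, so $1+hL>1-h\kappa>0$. All three updates are therefore defined.
\item Since $-\kappa<D<0$, we have $1+Dh>1-\kappa h>0$. Hence $r^+,y^+,E^+>0$.
\item For $A^+$, expand over the common denominator:
\[
A^+=\frac{\kappa(1+hL)+r(1+Dh)-y(1-\kappa h)}{1+hL}.
\]
Substituting $L=r-y$, the numerator should collapse to $A+h\,r(\kappa+D)$. This is positive because $\kappa+D>0$.
\end{itemize}
Induction on the steps then gives the first assertion.

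\emph{Step 2: $B>0$ propagates backwards.} The condition is $B>0\iff L<D$. I will rewrite it in the multiplicative coordinates $R,Y$.
\begin{itemize}
\item Write $L=E(R-Y)$ with $E=D\kappa/(\kappa R+DY-1)$. Since $E>0$ and $D\kappa<0$, the denominator $\kappa R+DY-1$ is negative.
\item Divide $L<D$ by $D<0$, then multiply by the negative denominator. The inequality $L<D$ becomes $(\kappa+D)Y>1$.
\item By \eqref{eq:chain-multiplicative}, $Y^+=Y(1-\kappa h)<Y$ with $Y>0$, so $(\kappa+D)Y$ strictly decreases along the chain.
\end{itemize}
Thus $B_m>0$ forces $B>0$ at every earlier step. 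This reformulation is the step I expect to be the crux. The naive approach, computing $L^+-L$ directly, produces a quadratic expression of indefinite sign, whereas in $R,Y$ the condition depends only on $Y$.

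\emph{Step 3: $x\ge0$ propagates backwards.} Once $B>0$ at every step, \eqref{eq:chain-h-update} gives
\[
r^+/r=(1+Dh)/(1+hL)\ge1,
\]
since $D\ge L$. So $r$ is nondecreasing along the chain. Hence $x=v-r\ge v-r_m=x_m\ge0$ at every step.

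\emph{Step 4: the outputs are valid triangles with interior weights.}
\begin{itemize}
\item \emph{Recovery is defined.} The denominator in \eqref{eq:chain-recovery} is $D\kappa/E\ne0$.
\item \emph{Scale.} The output scale is $t^+=hE^+>0$.
\item \emph{Coordinates.} We have $y^+>0$, and $x^+\ge0$ by Step 3 applied to the output. For the third coordinate, $z^+=1-x-y+\delta B=A+u+\delta B$, which is positive using $u\ge0$, $A>0$ and $B>0$.
\item \emph{First weight.} Set $\delta=hA/(1+hL)$. Then $\delta>0$, and $\delta<1$ is equivalent to $h\kappa<1$. So $\theta=1-\delta\in(0,1)$.
\item \emph{Second weight.} $\eta=A/(A+\delta B)\in(0,1)$ because $A,\delta,B>0$.
\end{itemize}
Finally, I will check that \eqref{eq:chain-recovery} reproduces \eqref{eq:chain-delta-update} under this choice of $\delta$, so that the contacts \eqref{eq:chain-vertex-contacts} actually hold with these weights.
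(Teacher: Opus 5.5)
Your proof is correct. Steps 1, 3 and 4 match the paper's argument almost line for line:
\begin{itemize}
\item the same identity $A^+=(A+h(\kappa+D)r)/(1+hL)$;
\item the same monotonicity $r^+-r=hrB/(1+hL)>0$ giving $x_k\ge x_m\ge0$;
\item the same sign checks, with your $\eta=A/(A+\delta B)$ equal to the paper's $(1+hL)/(1+hD)$, since $L+B=D$.
\end{itemize}

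The genuine difference is Step 2. The paper computes directly
\[
B^+=\frac{B-h(\kappa+D)y}{1+hL},
\]
so $B\le0$ forces $B^+<0$, and $B_m>0$ propagates backwards by contraposition. You instead use the closed form $B=E\bigl((\kappa+D)Y-1\bigr)/\kappa$ together with the strict decrease of $Y$ under \eqref{eq:chain-multiplicative}. The two arguments encode the same fact. Dividing the paper's identity by $E^+=E/(1+hL)$ gives $B^+/E^+=B/E-h(\kappa+D)Y$, which is exactly the step-by-step difference form of your formula.

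Each version has its advantage. Yours turns the final-step hypothesis into a single condition on one multiplicative coordinate, $(\kappa+D)Y_f>1$, readable from endpoint data. The paper's needs no passage through $(R,Y)$ and uses only the raw update.

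Your aside about the ``naive approach'' is too pessimistic. Track $B=D-L$ rather than $L$. Although $L^+-L=h(K-L^2)/(1+hL)$ with $K=Dr+\kappa y$ looks quadratic, one has $BL+L^2-K=LD-K=-(\kappa+D)y$. Hence $B^+(1+hL)$ is linear in the state, and the direct computation is just as short as your reformulation.
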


\begin{proof}
Because $A>0$, we have $L>-\kappa$ and hence
$1+hL>1-\kappa h>0$. Also $D>-\kappa$ implies $1+Dh>0$.
Besides \eqref{eq:chain-h-update}, direct substitution gives
\begin{equation}\label{eq:chain-ab}
 A^+=\frac{A+h(\kappa+D)r}{1+hL},\qquad
 B^+=\frac{B-h(\kappa+D)y}{1+hL}.
\end{equation}
Thus $r,y,E,A$ stay positive at every step. The second identity shows
that once $B\le0$, all subsequent values of $B$ are negative.
The hypothesis $B_m>0$ therefore forces $B>0$ throughout.
It follows that
\begin{equation}\label{eq:chain-r-monotone}
 r^+-r=\frac{hrB}{1+hL}>0.
\end{equation}
Hence $r_k\le r_m\le v$, proving $x_k\ge0$.

We now have $-\kappa<L<D<0$. The contact weights are
\begin{equation}\label{eq:chain-weights}
 \delta=\frac{hA}{1+hL},\qquad
 \theta=\frac{1-\kappa h}{1+hL},\qquad
 \eta=\frac{1+hL}{1+hD}.
\end{equation}
They satisfy $0<\delta,\theta,\eta<1$ and $\theta=1-\delta$.
Moreover $t^+=hE^+>0$ and
\begin{equation}\label{eq:chain-third-coordinate}
 z^+=1-x-y+\delta B=u+A+\delta B>0.
\end{equation}
The first two output coordinates are nonnegative, so these are the
required triangles and convex combinations.
\end{proof}

We now apply the lemma with support $(c,d,0)$ and prescribed last vertex
$(u,v,\kappa)$. The following strict inequalities ensure the needed
signs for these fixed data:
\begin{equation}\label{eq:chain-domain}
 \begin{gathered}
 0<u<v<d<1-u,\qquad u+v>d,\\
 0<c<v,\qquad c+d<1-u,\qquad
 d(\kappa+D)>\kappa c.
 \end{gathered}
\end{equation}
The first line gives $\kappa>0$, $D<0$, $\kappa+D>0$, and the required
positive final value $B_m=u+v-d$. The second line will give positive
initial values $r_0,A_0,E_0$.

One more segment relation is needed to connect the last triangles in
neighboring cyclic copies, as in \eqref{eq:chain-terminal-incidence}.
We impose
\begin{equation}\label{eq:chain-terminal-vertex}
 V_1(\rho\mathbf p^{(m)})
 =\tau V_3(\mathbf p^{(m)})+(1-\tau)\mathbf q,
 \qquad \tau=\frac uv.
\end{equation}
Its second coordinate gives the displayed value of $\tau$, and its
first coordinate is equivalent to
\begin{equation}\label{eq:chain-conic}
 u^2+uv+v^2-du-(1-d)v=0.
\end{equation}
The third coordinate then follows because both sides have coordinate
sum one. This quadratic equation, a conic in $(u,v)$ for fixed $d$,
describes the allowed last vertices. Choosing $\tau=u/v$ gives its
nonzero branch in rational form:
\begin{equation}\label{eq:chain-conic-parameter}
 v=\frac{1-d+d\tau}{1+\tau+\tau^2},\qquad
 u=\tau v,\qquad
 \kappa=\frac{d+(1-d)\tau^2}{1+\tau+\tau^2},
 \qquad 0<\tau<1.
\end{equation}

To express the requirement on the last vertex in $R,Y$ coordinates,
evaluate \eqref{eq:chain-transform} first at $(x,y)=(c,d)$ and then at
$(x,y)=(u,v)$. The subscripts $0$ and $f$ denote these initial and final
values, respectively:
\begin{equation}\label{eq:chain-endpoint-states}
 \begin{aligned}
 E_0&=d(\kappa+D)-\kappa c,&
 R_0&=\frac{v-c}{E_0},&Y_0&=\frac d{E_0},\\
 E_f&=\kappa(v-u)+D(v-\kappa),&
 R_f&=\frac{v-u}{E_f},&Y_f&=\frac v{E_f}.
 \end{aligned}
\end{equation}
Here $E_0>0$ follows from \eqref{eq:chain-domain}. Substitution of
\eqref{eq:chain-conic-parameter} gives
\[
 E_f=\frac{(1-\tau)(3d^2-3d+1)}{1+\tau+\tau^2}>0,
\]
because $3d^2-3d+1=3(d-1/2)^2+1/4$. Thus $R_0,Y_0,R_f,Y_f$ are
well defined and positive.

By \eqref{eq:chain-multiplicative}, the prescribed last vertex
$V_3(\mathbf p^{(m)})=(u,v,\kappa)$ is equivalent to the two product
equations in the next theorem.

\begin{theorem}\label{thm:chain-construction}
Assume \eqref{eq:chain-domain} and \eqref{eq:chain-conic}.
Let $m\ge1$ and $0<h_k<1/\kappa$ satisfy
\begin{equation}\label{eq:chain-products}
 \prod_{k=1}^m(1+Dh_k)=\frac{R_f}{R_0},\qquad
 \prod_{k=1}^m(1-\kappa h_k)=\frac{Y_f}{Y_0}.
\end{equation}
Starting from $\mathbf p^{(0)}=(c,d,0)$, use
\eqref{eq:chain-h-update} and \eqref{eq:chain-recovery} to construct
$\mathbf p^{(1)},\ldots,\mathbf p^{(m)}$. Then the indexed cyclic family
$(T(\rho^i\mathbf p^{(k)}))_{0\le k\le m,\ i\in\mathbb Z/3\mathbb Z}$
is non-separable, all its scales are positive, and its total scale is
\begin{equation}\label{eq:chain-total-scale}
 S=3\left(1-c-d+\sum_{k=1}^m h_kE_k\right).
\end{equation}
Its coordinate minima are zero in each coordinate. If additionally
$\kappa<v$, its $3(m+1)$ members are pairwise distinct.
\end{theorem}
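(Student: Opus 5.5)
The plan is to verify the hypotheses of Lemma~\ref{lem:chain-endpoint} from \eqref{eq:chain-domain}, use the product equations to pin the last triangle, and then read off the segment relations of Lemma~\ref{lem:chain-ns} from the contacts.

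\emph{Initial values.} At $\mathbf p^{(0)}=(c,d,0)$ we have $r_0=v-c>0$ and $y_0=d>0$. Next, $A_0=1-u-c-d>0$ by $c+d<1-u$, and $E_0=d(\kappa+D)-\kappa c>0$ by the last inequality of \eqref{eq:chain-domain}. The first line of \eqref{eq:chain-domain} gives $\kappa>0$, $D=v-d<0$ and $\kappa+D=1-u-d>0$. So Lemma~\ref{lem:chain-endpoint} applies to any steps $0<h_k<1/\kappa$.

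\emph{Pinning the last triangle.} By \eqref{eq:chain-multiplicative}, $R_m=R_0\prod(1+Dh_k)$ and $Y_m=Y_0\prod(1-\kappa h_k)$. The product equations \eqref{eq:chain-products} therefore give $R_m=R_f$ and $Y_m=Y_f$. The map $(x,y)\mapsto(R,Y)$ is inverted by the recovery formulas \eqref{eq:chain-recovery}: we get $E=D\kappa/(\kappa R+DY-1)$, $x=v-ER$ and $y=EY$. Hence $(x_m,y_m)=(u,v)$, which also yields $E_m=E_f>0$. Then $B_m=u+v-d>0$ and $x_m=u\ge0$. Lemma~\ref{lem:chain-endpoint} now gives the following.
\begin{itemize}
\item[(a)] Every $\mathbf p^{(k)}$ is a valid parameter vector of positive scale.
\item[(b)] Both contact identities \eqref{eq:chain-vertex-contacts} hold with weights in $(0,1)$.
\end{itemize}
Consequently, in the copy $i=0$, $V_1(\mathbf p^{(k)})\in P_{k,0}$ lies on a segment from $V_3(\mathbf p^{(k-1)})\in P_{k-1,0}$ to $\mathbf q\in C_{-1}$. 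Likewise $V_3(\mathbf p^{(k-1)})$ lies on a segment from $P_{k,0}$ to $\mathbf b\in P_{m,-1}$. These are the two relations in \eqref{eq:chain-incidences}. The terminal relation \eqref{eq:chain-terminal-incidence} is the identity \eqref{eq:chain-terminal-vertex}, with $\tau=u/v\in(0,1)$ and \eqref{eq:chain-conic} assumed. Applying $\rho$, which permutes coordinates and maps $T(x)$ to $T(\rho x)$, transports all relations to the other copies. Lemma~\ref{lem:chain-ns} then yields NS.

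\emph{Scale, minima and distinctness.}
\begin{itemize}
\item The total scale is $3\sum_k t_k$, with $t_0=1-c-d$ and $t_k=h_kE_k$ from \eqref{eq:chain-recovery}.
\item Each coordinate minimum is $0$, because $\rho^i\mathbf p^{(0)}$ has a zero in coordinate $i+3$ and all vectors are nonnegative.
\item Distinctness is the step I expect to need the most care. Along a chain, $r$ increases strictly by \eqref{eq:chain-r-monotone}, so the $x_k$ are pairwise distinct within one copy, including $k=0$. Members in different copies could still coincide, with $\rho^i\mathbf p^{(k)}=\mathbf p^{(l)}$ for $i\ne0$. I would rule this out by comparing scales and coordinates. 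The supports are distinct because $c<d$ and each has a single zero coordinate in a different position. For bridges, I would show that $y_k>x_l$ or a similar coordinate ordering holds: $y$ decreases from $d$ to $v$ while $x$ increases from $c$ to $u<v$, so $x_k<u<v\le y_l$. A cyclic coincidence would then require the largest of the first two coordinates to reappear in the wrong slot, and the third coordinate of some bridge would have to exceed the others' bounds.
\item The hypothesis $\kappa<v$ should be exactly what separates the last triangle's third coordinate $z_m$ from the ranges of $x$ and $y$. I would verify this by bounding $z_k=u+A_{k-1}+\delta B_{k-1}$ from below by $\kappa$ and comparing it with $x$ and $y$.
\end{itemize}
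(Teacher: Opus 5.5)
Your handling of the initial values, the pinning of $(x_m,y_m)=(u,v)$ through the product equations, the contact and terminal relations, non-separability via Lemma~\ref{lem:chain-ns}, the total scale and the coordinate minima is correct and follows the paper's proof. The one addition needed there is that the support scale $1-c-d=u+A_0>0$ is not an output of the recurrence, so it needs this one-line check.

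The distinctness step, however, does not work as written. First, you have the monotonicity of $x$ backwards. Since $r=v-x$ increases strictly, $c=x_0>x_1>\cdots>x_m=u$, so $x_k<u$ is false. The inequality $x_k<v\le y_l$ is still true, but because $x_k\le c<v$. Second, the planned lower bound $z_k\ge\kappa$ is false for every bridge. From $L^+=(L+hK)/(1+hL)$ with $K=Dr+\kappa y$ one gets $L^+-L=h(K-L^2)/(1+hL)$ and $K-L^2=rB+yA>0$. So $L=v-x-y$ increases, and $x_k+y_k\ge x_m+y_m=u+v$; hence $z_k=1-x_k-y_k-t_k<\kappa$. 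This upper bound, which your plan lacks, is what the paper uses. Combined with $\kappa<v\le y_k$ and $x_k<c<v$, it makes the second coordinate the unique maximum of every bridge vector, as it is for $(c,d,0)$. The position of the maximal coordinate therefore identifies the cyclic copy, and the strictly decreasing first coordinates separate members within a copy.

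You can also finish directly from the inequality $x_k\le c<v\le y_l$ you already (almost) have, with no information about $z$ at all. A coincidence between different copies means $\rho\mathbf p^{(k)}=\mathbf p^{(l)}$ or $\rho^2\mathbf p^{(k)}=\mathbf p^{(l)}$ for some $k,l$, i.e.\ $(z_k,x_k,y_k)=(x_l,y_l,z_l)$ or $(y_k,z_k,x_k)=(x_l,y_l,z_l)$. The first forces $x_k=y_l$ and the second forces $y_k=x_l$. Both are excluded by $x_k\le c<v\le y_l$, where $y_l\ge v$ follows from $y^+=y(1-\kappa h)/(1+hL)<y$, that is, from $A>0$. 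This route does not even use the hypothesis $\kappa<v$.
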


\begin{proof}
The initial values are
$r_0=v-c>0$, $y_0=d>0$, $A_0=1-u-c-d>0$, and $E_0>0$.
The first part of Lemma~\ref{lem:chain-endpoint} makes every update
well defined. By \eqref{eq:chain-multiplicative} and
\eqref{eq:chain-products}, its final transformed coordinates are $(R_f,Y_f)$.
The inverse formulas therefore give $(x_m,y_m)=(u,v)$. In particular,
$B_m=u+v-d>0$ and $x_m=u>0$, so the rest of the lemma proves the
geometric conditions and the two consecutive contact identities.
The support scale $1-c-d$ is positive as well.

Equation~\eqref{eq:chain-conic} gives
\eqref{eq:chain-terminal-vertex} with $0<\tau<1$.
All segment relations of Lemma~\ref{lem:chain-ns} now hold, proving
non-separability. Formula~\eqref{eq:chain-total-scale} follows from
$t_k=h_kE_k$. All coordinates are nonnegative, and the three
supports attain zero in the three different coordinates.

For distinctness, \eqref{eq:chain-r-monotone} gives
$c=x_0>x_1>\cdots>x_m=u$. Also
$y^+-y=-hAy/(1+hL)<0$, so $y_k\ge v$.
Writing $K=Dr+\kappa y$, we have
\begin{equation}\label{eq:chain-l-monotone}
 L^+-L=\frac{h(K-L^2)}{1+hL},\qquad
 K-L^2=rB+yA>0.
\end{equation}
Thus $x_k+y_k\ge u+v$, and every bridge has
$0<z_k<\kappa<v\le y_k$, while $x_k<c<v$.
Its second coordinate is uniquely largest; the same is true of
the support $(c,d,0)$. Cyclic copies in different sectors consequently
cannot coincide. Within each sector the first coordinates distinguish
the bridges, and their positive third coordinates distinguish them from
the support. Since a triangle determines its parameter vector, all
members are distinct.
\end{proof}

\subsection{Ordering the steps}

Fix the step values and vary only their order. The products in
\eqref{eq:chain-products} are unchanged, so the last vertex remains
$(u,v,\kappa)$. Lemma~\ref{lem:chain-endpoint} also preserves the validity
of every intermediate triangle and segment contact. We can therefore
compare the sums of bridge ratios for the different orders.

\begin{theorem}\label{thm:chain-order}
Suppose that the hypotheses of Lemma~\ref{lem:chain-endpoint}, including
its conditions at the last step, hold for one ordering of fixed step
values, with repetitions allowed. Every permutation is then admissible,
and the nondecreasing arrangement minimizes the sum of bridge ratios
$\sum_{k=1}^m h_kE_k$. Every order that is not nondecreasing has a
strictly larger sum.
In the nondecreasing arrangement the bridge scales are strictly
increasing. In particular, for a family from
Theorem~\ref{thm:chain-construction}, this arrangement minimizes its
total scale among all permutations.
\end{theorem}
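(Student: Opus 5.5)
The plan is to prove the statement by an adjacent-transposition (bubble-sort) argument. First, admissibility of every permutation is immediate. By \eqref{eq:chain-multiplicative} the final values $R_m,Y_m$ depend only on the products $\prod(1+Dh_k)$ and $\prod(1-\kappa h_k)$, which are symmetric in the steps. Hence the final state $(r_m,y_m)$, and with it $B_m$ and $x_m$, are the same for every order. The hypotheses $0<h_k<1/\kappa$ and the initial inequalities do not depend on the order either. So Lemma~\ref{lem:chain-endpoint} applies to every permutation, and along every order we have $A,B,r,y,E>0$ and $-\kappa<L<D<0$ at each step.

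Next I would compute the effect of swapping two adjacent steps $h,h'$ at positions $k,k+1$. The state before the pair and the state after it are unchanged, again because the products commute. So only $t_k+t_{k+1}$ changes. Write the recovery formula as $E^+=D\kappa/G$, where
\[
G(h)=\kappa R(1+Dh)+DY(1-\kappa h)-1=G_0+h\,D\kappa(R-Y).
\]
Here $G_0=D\kappa/E<0$ and $R-Y=L/E$. Put $a=|G_0|=|D\kappa|/E$ and $b=D\kappa L/E>0$. Then $|G(h)|=a-bh>0$, and the first scale is $t_k=h|D\kappa|/(a-bh)$. After both steps the denominator is $G_2$, with
\[
|G_2|=a-b(h+h')+hh'\,|D\kappa|K/E,\qquad K=Dr+\kappa y.
\]
This quantity is symmetric in $h,h'$, and the second scale is $h'|D\kappa|/|G_2|$. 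Subtracting the two orders, I expect
\[
S(h,h')-S(h',h)=|D\kappa|\,(h-h')\Bigl[\frac{a}{(a-bh)(a-bh')}-\frac1{|G_2|}\Bigr].
\]

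The key step, and the only real obstacle, is the sign of the bracket. Expanding gives $(a-bh)(a-bh')/a=a-b(h+h')+hh'\,b^2/a$, and $b^2/a=|D\kappa|L^2/E$. The bracket is therefore positive exactly when $K>L^2$. This is precisely the inequality $K-L^2=rB+yA>0$ from \eqref{eq:chain-l-monotone}, and it holds at every state by Lemma~\ref{lem:chain-endpoint}. Consequently, placing the larger of two unequal adjacent steps first strictly increases the sum, while equal steps leave it unchanged. Any order that is not nondecreasing has an adjacent descent; swapping it strictly decreases $\sum h_kE_k$. Finitely many such swaps reach the nondecreasing order, which gives both the minimality and the strictness claims.

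Finally, for strict monotonicity of the scales, I would use $E^+=E/(1+hL)$ with $-1<hL<0$, which shows that $E_k$ is strictly increasing. In the nondecreasing arrangement, $t_k=h_kE_k$ is then a product of a nondecreasing positive factor and a strictly increasing positive factor, hence strictly increasing. The last sentence of the statement follows from \eqref{eq:chain-total-scale}, because the support term $1-c-d$ does not depend on the order.
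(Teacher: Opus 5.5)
Your proposal is correct and follows the paper's proof. Admissibility of every order comes from the order-independence of the products in \eqref{eq:chain-multiplicative} together with Lemma~\ref{lem:chain-endpoint}. Minimality and strictness come from an adjacent-swap identity whose sign reduces to $K-L^2=rB+yA>0$, and strict increase of the scales comes from $E^+=E/(1+hL)$. Your bookkeeping through the recovery denominators is only a reparametrization of the paper's formulas $E_a=E/(1+aL)$ and $E_{ab}=E/(1+(a+b)L+abK)$, since $|G(h)|=a(1+hL)$ and $|G_2|=a\,(1+(h+h')L+hh'K)$. It yields exactly the swap identity \eqref{eq:chain-swap-cost}.
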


\begin{proof}
For every permutation, the first part of
Lemma~\ref{lem:chain-endpoint} makes all updates well defined.
Equation~\eqref{eq:chain-multiplicative} gives the same final $(R,Y)$,
and the inverse formulas give the same final pair $(r,y)$. The remaining part
of the lemma proves admissibility for every permutation.
Consider consecutive steps $a,b$ with the same input coordinates $(x,y)$,
and put $L=r-y$ and $K=Dr+\kappa y$. Their intermediate and final
values of $E$ are
\[
 E_a=\frac E{1+aL},\qquad E_b=\frac E{1+bL},\qquad
 E_{ab}=E_{ba}=\frac E{1+(a+b)L+abK}.
\]
The final identity follows by applying \eqref{eq:chain-h-update} twice.
The two sums of ratios are $aE_a+bE_{ab}$ and $bE_b+aE_{ab}$; subtraction gives
\begin{equation}\label{eq:chain-swap-cost}
 \begin{split}
 &(aE_a+bE_{ab})-(bE_b+aE_{ab})\\
 &\qquad=
 \frac{Eab(a-b)(K-L^2)}
 {(1+aL)(1+bL)\,[1+(a+b)L+abK]}.
 \end{split}
\end{equation}
All denominator factors are positive, and
\eqref{eq:chain-l-monotone} gives $K-L^2>0$.
Exchanging neighboring steps with $a>b$ therefore decreases their total ratio.
The vertex $V_3$ after the pair is unchanged, so all subsequent ratios
are unchanged. Repeated exchanges prove the minimizing assertion.
Finally, $L<0$ implies $E_k>E_{k-1}$; together with nondecreasing positive
$h_k$, this gives strict increase of $h_kE_k$.
\end{proof}

A permutation preserves the last vertex $V_3$ but may change the last
triangle's ratio $h_mE_f$ and its third coordinate. The fixed vertices
$\mathbf q,\mathbf b$ and the final segment relation use only its first
two coordinates, so this change is compatible with the construction.
Theorem~\ref{thm:chain-order} compares orders of fixed step values;
varying those values or their number is a separate problem.

\section{Optimized chains and an exact example}\label{sec:finite-bounds}

The recurrence leaves the support parameters, the terminal contact point
and the step values available for optimization. For a fixed number $m$ of
bridges in each copy, we minimize
\[
3\left(1-c-d+\sum_{k=1}^m h_kE_k\right)
\]
over $c,d,\tau,h_1,\ldots,h_m$, subject to the terminal parametrization
\eqref{eq:chain-conic-parameter}, the two product equations
\eqref{eq:chain-products} and the inequalities of
Theorem~\ref{thm:chain-construction}. Theorem~\ref{thm:chain-order} allows
the steps to be arranged in nondecreasing order.

Table~\ref{tab:finite-bounds} records a branch found by local constrained
numerical minimization. Starting from a shorter chain, we increase the
number of steps and optimize again. The searches checked the intermediate
coordinates, scales and segment weights, with endpoint residuals below
$1.4\cdot10^{-13}$. They did not exhaust the parameter space. The values
in this table are numerical approximations; the exact lower bound below
comes from a separate rational construction.

\begin{table}[htbp]
\centering
\caption{Numerically optimized finite chains. Here $n=3(m+1)$, and
$t_{\max}$ is the largest bridge scale, excluding the three supports.
The displayed values are rounded numerical results.}
\label{tab:finite-bounds}
\begin{tabular}{rrrr}
\toprule $n$ & $S$ & $1/S$ & $t_{\max}$\\\midrule
18&0.9422606658&1.0612774536&0.0082007879\\
27&0.9416566004&1.0619582548&0.0057232823\\
39&0.9412866516&1.0623756305&0.0040821183\\
63&0.9409709114&1.0627321077&0.0025955738\\
153&0.9406704489&1.0630715584&0.0010977097\\
303&0.9405666638&1.0631888611&0.0005594966\\\bottomrule
\end{tabular}
\end{table}

To obtain an exact example, we round the coordinates of the last numerical
candidate slightly downwards. Since $T(x)=\{z\in\Sigma:z\ge x\}$,
this operation enlarges the triangles. The resulting rational vectors
are completely specified in Appendix~\ref{app:large-data}. They need not
satisfy the original vertex equalities; we verify their segment relations
directly.

\begin{proof}[Proof of Theorem~\ref{thm:main}]
Let $\mathbf p^{(k)}=10^{-9}(X_k,Y_k,Z_k)$, $0\le k\le100$, be the
vectors in Appendix~\ref{app:large-data}, and put
\[
P_{k,i}=T(\rho^i\mathbf p^{(k)}),\qquad C_i=P_{0,i}
\quad (i\in\mathbb Z/3\mathbb Z).
\]
Every listed coordinate is nonnegative, and every coordinate sum is less
than one. The listed $X_k$ decrease strictly and every $Y_k$ exceeds
both $X_k$ and $Z_k$, so the $303$ cyclic vectors are distinct. The support
vector is
\[
\mathbf p^{(0)}=10^{-9}(310226993,415666856,0).
\]
It and its cyclic images make the three coordinate minima zero. Thus the
smallest containing homothet is $\Sigma$.

For each of the $201$ relations in \eqref{eq:chain-incidences} and
\eqref{eq:chain-terminal-incidence} with $i=0$, evaluate
\eqref{eq:incidence-M} at $0,1$ and at the values in
\eqref{eq:incidence-breakpoints} that belong to $[0,1]$. Every comparison
is rational, and in every case the least value is less than
$1-1/(4\cdot10^9)$. This verifies all $603$ relations by cyclic symmetry.
Lemma~\ref{lem:chain-ns} therefore proves non-separability. This finite
check requires only the displayed coordinates and the scalar test; no
separately supplied segment weights are needed.

Finally, direct summation gives
\[
\sum_{k=0}^{100}\left(1-10^{-9}(X_k+Y_k+Z_k)\right)
=\frac{78380593}{250000000}.
\]
Multiplying by three gives total scale $235141779/250000000$, and
\eqref{eq:normalized-cost} gives the asserted covering factor.
\end{proof}

\begin{figure}[!t]
\centering
\input{figures/large-chain}
\caption{The exact $303$-member family. The three large gray triangles
touch the sides of $\Sigma$. Each orange chain contains $100$ bridges.
The right panel enlarges part of one chain $120$ times to make the
individual triangles visible. All drawing coordinates come from the
rational data in Appendix~\ref{app:large-data}.}
\label{fig:large-chain}
\end{figure}

The decreasing largest bridge scales in Table~\ref{tab:finite-bounds}
suggest studying a continuous model for increasingly fine chains.
The table does not prove convergence or improvement for every number
of members; the next section describes the candidate suggested by these
computations.

\section{A continuous model and a candidate value}\label{sec:continuous}

The finite chains suggest a model in which all bridge scales tend to zero.
We describe this model and a candidate value; convergence of finite
NS-families to the proposed optimum remains unproved.  Retain the parameters and
transformed coordinates of Section~\ref{sec:chains}.
For this limiting calculation, keep the support and terminal parameters
fixed and iterate the recurrence without requiring the finite endpoint
equations. We impose the terminal conditions
\eqref{eq:continuous-endpoint} on the limiting curve. Realizing that curve
by exact finite NS chains would require varying the parameters, as
explained below.
If $\max_k h_k\to0$ and $\sum_kh_k\to s$, the two products in
\eqref{eq:chain-products} tend to $e^{Ds}$ and $e^{-\kappa s}$.
The corresponding continuous coordinates are
\begin{equation}\label{eq:continuous-flow}
\begin{gathered}
 R(t)=R_0e^{Dt},\qquad Y(t)=Y_0e^{-\kappa t},\qquad 0\le t\le s,\\
 E(t)=\frac{D\kappa}{\kappa R(t)+DY(t)-1},\qquad
 x(t)=v-E(t)R(t),\qquad y(t)=E(t)Y(t).
\end{gathered}
\end{equation}
The point $(x(t),y(t),1-x(t)-y(t))$ traces the limiting curve of the
vertices $V_3$ used in the contacts.  A finite step contributes scale
$h_kE_k$; when the recovery denominator stays away from zero, these sums
converge to the integral of $E$.  Thus the proposed total scale is
\begin{equation}\label{eq:continuous-cost}
 S_{\mathrm{cont}}=3\left(1-c-d+\int_0^sE(t)\,dt\right).
\end{equation}
The three supports retain scale $1-c-d$, while the integral accounts for
the small bridges in each of the three chains.  Points on the limiting
curve are not themselves positive triangles.

The parameters to optimize are $c,d,\tau,s$, with $s>0$ and $u,v,\kappa$ given by
\eqref{eq:chain-conic-parameter}.  The terminal conditions are
\begin{equation}\label{eq:continuous-endpoint}
 R_0e^{Ds}=R_f,\qquad Y_0e^{-\kappa s}=Y_f.
\end{equation}
We impose \eqref{eq:chain-domain}, allowing $c+d=1-u$, and require the
denominator in \eqref{eq:continuous-flow} to remain nonzero, with
$E,x,y,1-x-y>0$, $A\ge0$ and $B>0$ throughout $[0,s]$.
Here $A=1-u-x-y$ and $B=x+y-d$, as in the finite recurrence.
These conditions define the continuous optimization problem independently
of any finite approximation.

A numerical candidate is given by
\begin{equation}\label{eq:continuous-candidate-parameters}
\begin{aligned}
 c&=0.3104341279739202,&d&=0.41586782506519804,\\
 \tau&=0.6737630444333738,&s&=1.1213014301809736.
\end{aligned}
\end{equation}
At these displayed decimals, numerical evaluation gives endpoint errors
less than $5\cdot10^{-16}$ in each of $x,y$, and
\begin{equation}\label{eq:continuous-candidate-value}
 S_{\mathrm{cont}}\approx0.9404609910968,\qquad
 \lambda_{\mathrm{cand}}:=S_{\mathrm{cont}}^{-1}
       \approx1.0633083238.
\end{equation}
The support scale is approximately $0.2736980469609$, and the integral
in \eqref{eq:continuous-cost} is approximately $0.03978895007138$.
The displayed parameters satisfy
$A_0=1-u-c-d\approx4.18\cdot10^{-8}$, suggesting an optimum on the
boundary $c+d=1-u$.

The search used three local optimization starts within
$0.2\le c\le0.4$, $0.35\le d\le0.5$, $0.5\le\tau\le0.9$ and
$0\le s\le3$.  Two runs also allowed a nonnegative final finite step,
whose parameter approached zero; the third used only the continuous model.
The objective was evaluated by $128$-node Gaussian quadrature, and the
trajectory inequalities were tested at the endpoints and quadrature
nodes.  These runs support the candidate value but do not establish
pathwise feasibility or optimality.  In particular,
\eqref{eq:continuous-candidate-value} is not a proved lower bound for
$\simplexfactor$; the finite bound \eqref{eq:main-bound} remains the
rigorous quantitative conclusion.

Approximating a continuous solution also requires the finite parameters
to change.  Indeed, put
$\Phi(h)=\kappa\log(1+Dh)+D\log(1-\kappa h)$.
Under the stated signs, $\Phi(0)=0$ and $\Phi(h)>0$ for
$0<h<1/\kappa$.  Consequently a finite chain with positive steps has
\[
 \kappa\log(R_f/R_0)+D\log(Y_f/Y_0)=\sum_k\Phi(h_k)>0,
\]
whereas the continuous endpoint equations make the left side zero.
Thus exact finite chains cannot keep both the supports and terminal point
fixed at a continuous solution.  Their parameters would have to vary
with the number of steps, and the suggested boundary $A_0=0$ requires
additional care.

Do suitably optimized finite chains approach this continuous model and
its candidate value?  Does the reciprocal of the infimum in this
continuous problem equal $\simplexfactor$, or can a different contact
pattern give a larger value?
We leave both questions open.  We do not claim an actual countable
family of positive triangles realizing the continuous value.

\appendix
\section{Algebraic data and exact verification}\label{app:exact}

We define the candidates from Section~\ref{sec:three-configurations} by
their contact and first-order stationarity equations, and isolate the
resulting roots in rational boxes. Terminating decimals in the boxes
denote rational numbers. All defining equations and rational centers are
given below. The remaining arithmetic consists of evaluating these equations and their
Jacobians on the specified boxes.

\begin{proposition}[Exact verification]\label{prop:exact-verification}
For each \(n\in\{4,5,6\}\), the system below has exactly one zero in
\(\mathcal B_n\), and its coordinates are real algebraic. The recovered
triangles have positive ratios, all prescribed weights lie in \((0,1)\),
and the segment relations in \eqref{eq:four-rules}, \eqref{eq:five-rules}, and
\eqref{eq:six-nine-rules} hold, respectively. The smallest containers are
\(\Sigma\), \(\Sigma\), and \(\mathcal K_6\), and the covering factors
satisfy \eqref{eq:four-value-isolation}, \eqref{eq:five-value-isolation},
and \eqref{eq:six-value-isolation}.
\end{proposition}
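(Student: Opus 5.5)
For each $n\in\{4,5,6\}$ the verification is a certified root isolation (an interval Newton / Krawczyk argument) followed by exact interval evaluation of finitely many rational expressions.

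Let $F_n(w)=0$ be the polynomial system with rational coefficients: the contact equations together with the first-order stationarity equations. Let $\mathcal B_n$ be the rational box with center $w_n^\circ$ given in the appendix, and let $J_n$ be the Jacobian of $F_n$. Fix a rational approximate inverse $Y_n\approx J_n(w_n^\circ)^{-1}$. Using exact rational interval arithmetic, we compute the Krawczyk operator
\[
K_n=w_n^\circ-Y_nF_n(w_n^\circ)+\bigl(I-Y_nJ_n(\mathcal B_n)\bigr)(\mathcal B_n-w_n^\circ),
\]
where $J_n(\mathcal B_n)$ is the interval extension of the Jacobian on the box. We check $K_n\subset\operatorname{int}\mathcal B_n$. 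By the Krawczyk theorem this inclusion gives a unique zero in $\mathcal B_n$ and shows that $J_n$ is nonsingular there. The zero is a nondegenerate, hence isolated, solution of a polynomial system over $\mathbb Q$. Each coordinate is therefore algebraic over $\mathbb Q$, by elimination and the fact that isolated points of a variety defined over $\mathbb Q$ have algebraic coordinates. The main practical difficulty is at this step: the boxes have to be tight enough to control interval overestimation in $I-Y_nJ_n$. If the first attempt fails, I would shrink the box around the center, or apply one exact Newton correction, before recomputing.

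With the zero enclosed, every remaining claim is a strict inequality between rational functions of the parameters. We evaluate each one in interval arithmetic over $\mathcal B_n$ and check its sign. The claims are the following.
\begin{itemize}
\item Recovered ratios: $t(x^{(i)})>0$, together with nonnegativity of the parameter vectors. For $n=6$ these are $b,g>0$ and $1-3Y>0$.
\item Weights: $p_*,v_*,w_*$ and the analogous weights for $n=4,5$ have interval enclosures inside $(0,1)$.
\item Segment relations. Each relation listed in \eqref{eq:four-rules}, \eqref{eq:five-rules} or \eqref{eq:six-nine-rules} either holds as an exact vertex-on-segment identity implied by the defining equations, as in \eqref{eq:five-worked-witness} and \eqref{eq:six-basic-incidences}, or is checked with Lemma~\ref{lem:positive-part}. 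In the second case we show that $M(x,y,z;\theta)\le1$ at the recorded weight, with the interval upper bound strictly below $1$ whenever the relation is not an identity.
\end{itemize}
For $n=6$ the cyclic images of the relations follow from $R$-equivariance, as already noted.

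For the containers, when $n=4,5$ we check that each coordinate minimum is attained at zero by some member and that all other coordinates are nonnegative. Lemma~\ref{lem:minimal-container} then gives the container $\Sigma$. For $n=6$ we verify containment in $\mathcal K_6$ by interval inequalities, and minimality follows from the edge argument already given before \eqref{eq:six-value}. Finally, $\lambda$ is a rational function on the box: $(\sum t_i)^{-1}$ for $n=4,5$, and $(1-3Y)/(3(b+g))$ for $n=6$. Its interval enclosure gives the strict bounds \eqref{eq:four-value-isolation}–\eqref{eq:six-value-isolation}. Any enclosure that is too wide for a claimed strict inequality is handled by bisecting the box, which is legitimate because the root is already known to be unique.
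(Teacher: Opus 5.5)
Your proposal is correct and follows essentially the paper's route. The paper certifies each root with the exact-inverse Newton map $N(z)=z-CG(z)$, checking that $N$ maps the box into itself and that $\sup_{\mathcal B}\|I-CJ_G\|_\infty<1$; this is the same mean-value test your Krawczyk inclusion encodes. It then verifies positivity, the weights, the containers and the factor bounds by exact rational interval evaluation over the box. For the four- and five-member relations it checks all eight subset inequalities of Lemma~\ref{lem:positive-part}: two are equalities from the defining system and the rest have explicit positive slack. This is equivalent to your vertex-identity check together with positive scales and weights in $(0,1)$.
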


To replace a numerical candidate by an exact point, we prove that a small
rational box contains precisely one solution. Let \(G\) be a rational
polynomial map with \(m\) variables and \(m\) components, write \(J_G\)
for its Jacobian matrix, let
\(\mathcal B=c+[-r,r]^m\), and let \(C\) be a
nonsingular rational matrix. If \(N(z)=z-CG(z)\) maps \(\mathcal B\) into
itself and
\(q:=\sup_{\mathcal B}\|I-CJ_G\|_\infty<1\), the contraction theorem
gives exactly one zero of \(G\) in \(\mathcal B\). Its Jacobian is
nonsingular, and the isolated zero has real-algebraic coordinates.
Here \(\|\cdot\|_\infty\) is the maximum norm and its induced row-sum
norm. Exact rational interval evaluation over the entire boxes gives
\begin{equation}\label{eq:exact-verification-summary}
\begin{array}{c|c|c|c|c}
n&m&r&q&\text{self-map bound}\\ \hline
4&6&10^{-18}&<2.20\cdot10^{-13}&N(\mathcal B_4)\subset\operatorname{int}\mathcal B_4\\
5&33&10^{-18}&<3.299\cdot10^{-16}&\|N(c_5)-c_5\|_\infty/r+q<0.491701\\
6&3&10^{-12}&<7\cdot10^{-7}&\|N(c_6)-c_6\|_\infty/r+q<5.01\cdot10^{-4}
\end{array}
\end{equation}
The last two bounds imply a strict self-map by the mean-value inequality.
The following subsections define the systems and complete the geometric
verification. These are rational interval bounds, not tests at sampled
points; see \cite{Rump2010} for the general arithmetic framework.

\subsection{Four members}\label{app:four}

Represent the three supporting contacts by zero coordinates and write
\[
 x^{(1)}=(0,A,B),\quad x^{(2)}=(C,0,D),\quad
 x^{(3)}=(E,F,G),\quad x^{(4)}=(H,I,0).
\]
We realize each segment relation by placing a target vertex on a segment between
source vertices. In the subset test of Lemma~\ref{lem:positive-part}, this
contact is expressed by making two inequalities equalities. The table lists
their subsets \(J,J'\) for each ordered witness:
\[
\begin{array}{c|cc}
\text{witness}&J&J'\\ \hline
\wit{1}{2}{3}{\theta_1}&\{3\}&\{1,3\}\\
\wit{2}{4}{3}{\theta_2}&\{1\}&\{1,2\}\\
\wit{1}{4}{2}{\theta_3}&\{2\}&\{2,3\}\\
\wit{2}{3}{1}{\theta_4}&\{1\}&\{1,2\}\\
\wit{2}{3}{4}{\theta_5}&\{3\}&\{1,3\}
\end{array}
\]
Subtracting the two corresponding subset equalities gives the coordinate
contacts. Retain the first nine equalities, leaving the last \(J'\) for a
separate equation:
\begin{equation}\label{eq:four-linear-system}
\begin{aligned}
 E&=(1-\theta_1)C,&\theta_1B+(1-\theta_1)D&=1-E-F,\\
 F&=(1-\theta_2)I,&\theta_2C+(1-\theta_2)H&=1-F-G,\\
 D&=\theta_3B,&\theta_3A+(1-\theta_3)I&=1-C-D,\\
 A&=(1-\theta_4)F,&\theta_4C+(1-\theta_4)E&=1-A-B,\\
 &&\theta_5D+(1-\theta_5)G&=1-H-I.
\end{aligned}
\end{equation}
Put \(\vartheta=(\theta_1,\ldots,\theta_5)\),
\(u=(A,B,C,D,E,F,G,H,I)^{\mathsf T}\), and
\(b_4=(0,1,0,1,0,1,0,1,1)^{\mathsf T}\). Move all variable terms to the
left in \eqref{eq:four-linear-system}, reading rows from top to bottom and
within each row from left to right. This defines
\begin{equation}\label{eq:four-matrix-system}
 L_4(\vartheta)u=b_4,\qquad \Delta_4=\det L_4.
\end{equation}
For \(\Delta_4\ne0\), recover the coordinates by
\begin{equation}\label{eq:four-coordinate-chart}
 u(\vartheta)=\operatorname{adj}(L_4)b_4/\Delta_4.
\end{equation}
Set \(\ell_4=(0,0,\theta_5,0,1-\theta_5,0,0,-1,0)\). The remaining
contact and the total scale are determined by the integer polynomials
\begin{equation}\label{eq:four-P4-definition}
\begin{aligned}
 R_4&=\ell_4\operatorname{adj}(L_4)b_4
     =-\det\begin{pmatrix}L_4&b_4\\\ell_4&0\end{pmatrix},\\
 P_4&=\frac{R_4}{\theta_1(\theta_3-1)},\qquad
 N_4=4\Delta_4-\mathbf1^{\mathsf T}\operatorname{adj}(L_4)b_4.
\end{aligned}
\end{equation}
Here \(\mathbf1\) is the all-one column vector. The division defining
\(P_4\) has zero polynomial remainder. In particular,
\begin{equation}\label{eq:four-contact-polynomial}
 \theta_5C+(1-\theta_5)E-H
 =\frac{\theta_1(\theta_3-1)}{\Delta_4}P_4.
\end{equation}
Moreover,
\begin{equation}\label{eq:four-scale-function}
 S_4=4-\mathbf1^{\mathsf T}u=N_4/\Delta_4.
\end{equation}
The search minimizes \(S_4\) while preserving the remaining contact
\(P_4=0\). Its first-order equations require the derivative of \(S_4\) to
vanish along this contact surface. For \(1\le j\le5\), let
\(\partial_j=\partial/\partial\theta_j\) and define
\(\Psi_j=(\partial_jN_4)\Delta_4-N_4\partial_j\Delta_4\).
Clearing denominators in the multiplier equations gives the square system
\(\mathcal G_4\):
\begin{equation}\label{eq:four-KKT-system}
 P_4=0,\qquad \Psi_j-\eta\,\partial_jP_4=0\quad(1\le j\le5),
\end{equation}
in the order \((\vartheta,\eta)\), where \(\eta\) is the multiplier after
the common denominator is cleared. Its defining box is
\begin{equation}\label{eq:four-root-box}
\begin{gathered}
 c_4=(0.297079749755566340,\ 0.007534140977213632,\\
 \phantom{c_4=(}0.725519678908144167,\ 0.264221252988580770,\\
 \phantom{c_4=(}0.148948723710503536,\ 0.000206823449878328),\\
 \mathcal B_4=c_4+[-10^{-18},10^{-18}]^6.
\end{gathered}
\end{equation}

Set \(M_4=J_{\mathcal G_4}(c_4)^{-1}\), which exists over
\(\mathbb Q\), and define the correction
\(N_4^{\mathrm{corr}}(z)=z-M_4\mathcal G_4(z)\). The first row of
\eqref{eq:exact-verification-summary} isolates the root. On its box,
\(\Delta_4>0\), every \(\theta_j\) lies in \((0,1)\), and every nonzero
coordinate and scale exceeds \(0.017\). Among the forty subset inequalities,
ten hold with equality by the defining equations; for each of the remaining
thirty, the right side minus the left side exceeds
\(0.017\). Thus all five segment relations hold. The three prescribed zero
coordinates give the smallest container \(\Sigma\), and
\begin{equation}\label{eq:four-value-isolation}
 1.053316104146871<\lambda(\mathcal F_4)=1/S_4<1.053316104146873.
\end{equation}

\subsection{Five members}\label{app:five}

The supporting members \(T_1,T_4,T_5\) touch the three sides of the
container. In our coordinates these contacts are
\(x^{(1)}_{2}=x^{(4)}_{3}=x^{(5)}_{1}=0\). Use the free-coordinate order
\begin{equation}\label{eq:five-free-coordinate-order}
\begin{split}
 z=(&x^{(1)}_{1},x^{(1)}_{3},x^{(2)}_{1},x^{(2)}_{2},x^{(2)}_{3},x^{(3)}_{1},\\
    &x^{(3)}_{2},x^{(3)}_{3},x^{(4)}_{1},x^{(4)}_{2},x^{(5)}_{2},x^{(5)}_{3}).
\end{split}
\end{equation}
Let \(\theta=(\theta_1,\ldots,\theta_7)\). As in the four-member
construction, each vertex contact is imposed through two subset equalities.
The following row \(q\) selects the subsets \(\{r_q\}\) and
\(\{r_q,s_q\}\) for its witness:
\begin{equation}\label{eq:five-incidence-data}
\begin{array}{c|cc}
\text{witness}&r_q&s_q\\ \hline
\wit{1}{4}{2}{\theta_1}&1&2\\
\wit{4}{5}{3}{\theta_2}&2&3\\
\wit{1}{5}{2}{\theta_3}&3&1\\
\wit{1}{2}{3}{\theta_4}&1&2\\
\wit{3}{4}{1}{\theta_5}&2&3\\
\wit{1}{2}{4}{\theta_6}&3&1\\
\wit{2}{3}{5}{\theta_7}&1&2
\end{array}
\end{equation}
For row \(q\), write its indices as \((i_q,j_q,k_q,r_q,s_q)\), put
\(y^{(q)}=\theta_qx^{(i_q)}+(1-\theta_q)x^{(j_q)}\). To check every subset
inequality for this relation, record the difference between its right and left sides:
\begin{equation}\label{eq:five-subset-slack}
 h_{q,J}=1-\sum_{\ell\in J}y^{(q)}_\ell
          -\sum_{\ell\notin J}x^{(k_q)}_{\ell},\qquad J\subseteq\{1,2,3\}.
\end{equation}
Thus \(h_{q,J}\ge0\) means that the corresponding subset inequality holds;
the selected contacts require equality. Set
\(g_{q,0}=-h_{q,\{r_q\}}\),
\(g_{q,1}=-h_{q,\{r_q,s_q\}}\), and
\(S(z)=5-\sum_{a=1}^{12}z_a\).
The first-order equations for minimizing \(S\) with these fourteen contacts
express its differential as a linear combination of the contact
differentials. Introduce their multipliers \(\alpha_{q,e}\), in
lexicographic order, and impose
\begin{equation}\label{eq:five-stationary-system}
\begin{split}
 g_{q,e}&=0\quad(1\le q\le7,\ e=0,1),\\
 \nabla_{z,\theta}\left(S+\sum_{q=1}^7\sum_{e=0}^1
                              \alpha_{q,e}g_{q,e}\right)&=0.
\end{split}
\end{equation}
These are \(33\) integer polynomial equations of degree at most two in
\(\zeta=(z,\theta,\alpha)\). Let \(\mathcal G_5\) be their left-hand
sides, with the fourteen contact equations first. The box is
\begin{equation}\label{eq:five-root-box}
 \mathcal B_5=c_5+[-10^{-18},10^{-18}]^{33},
\end{equation}
where the entries of \(10^{18}c_5\), read across each row and then
from top to bottom, are the following integers:
\begin{center}
\begin{tabular}{rrr}
\toprule
386014849027220836 & 308668158946037411 & 268568902039564312\\
386876036567496673 & 325869172515483314 & 285152485513455788\\
273710090048793452 & 423366639846964273 & 286361546133516014\\
390375173851731673 & 290721097484648125 & 426619246960605093\\
8963524113764484 & 7624145269613617 & 695747592913518932\\
292512163644851711 & 729080021651239933 & 151496450497535646\\
150319137153536034 & 296726341292503776 & 101711508212025312\\
282270889568991760 & 86031930355918667 & 390689774720592742\\
171907921572006435 & 403048362230368236 & 175690831925223518\\
501167820443288557 & 190637395276074643 & 605133557767341222\\
103834779310574281 & 634505174277405435 & 108947111983202378\\
\bottomrule
\end{tabular}
\end{center}
The variable order is \((z,\theta,\alpha)\) as specified above.

The matrix \(C_5=J_{\mathcal G_5}(c_5)^{-1}\) exists over \(\mathbb Q\).
With \(N_5(\zeta)=\zeta-C_5\mathcal G_5(\zeta)\), the second row of
\eqref{eq:exact-verification-summary} isolates a unique algebraic root.
On the box, all twelve free coordinates are positive,
\(0<\theta_q<1\), and \(0.0177<t_i<0.3234\). The fourteen defining
slacks vanish at the root; for the remaining forty-two,
\begin{equation}\label{eq:five-inactive-slacks}
 h_{q,J}>0.0177.
\end{equation}
The seven segment relations follow from Lemma~\ref{lem:positive-part}. The fixed
zeros give the smallest container \(\Sigma\). The total scale lies in
\((0.947696601064483024,0.947696601064483048)\), so
\begin{equation}\label{eq:five-value-isolation}
\begin{split}
 1.0551900248209902896764181722
 &<\lambda(\mathcal F_5)=1/S(z_*)\\
 &<1.0551900248209903163986418959.
\end{split}
\end{equation}

\subsection{Six members}\label{app:six}

The auxiliary generators and equivariant vertex labels are those of
Subsection~\ref{subsec:six}. To recover the geometry from the segment
contacts, use their three weights \(p,v,w\) as parameters. Put
\(d(p,v)=p(1+2v)-2\); solving five scalar contact equations gives
\begin{equation}\label{eq:six-recovery}
\begin{aligned}
 u&=\frac1{1-v},&a&=\frac{1-pv}{d(p,v)},\\
 b&=\frac{av}{(1-v)w}-1-a,&c&=1+a+\frac{a(1+2a)}{1+a+b},\\
 X&=a(u-1),&Y&=1+a-u(1+2a),\\
 g&=c(u-1).
\end{aligned}
\end{equation}
Direct substitution verifies
\begin{equation}\label{eq:six-basic-identities}
 r_5=wq_2+(1-w)r_1,\qquad
 p_1=(1-p)p_6+pr_4,\qquad r_2=(1-v)p_6+vr_1,
\end{equation}
except possibly the second coordinate of the middle equality. Its residual is
\begin{equation}\label{eq:six-middle-residual}
 [p_1-(1-p)p_6-pr_4]_2=\frac{E(p,v,w)}{w(v-1)d(p,v)},
\end{equation}
where
\begin{equation}\label{eq:six-E}
 E=p^2(1-v)w^2-\{p^2+v-pv(1+v)\}w+v(1-pv).
\end{equation}
The covering function becomes
\begin{equation}\label{eq:six-L}
 L=\frac{1-3Y}{3(b+g)}
   =\frac{w(pv^2+pv+p-v+1)}{3D_L},
\end{equation}
with
\[
 D_L=2pv^2w-pv^2-pvw^2+pvw+pw^2-pw-2vw+v+w.
\]
To select the best candidate found by the search, impose the first-order
conditions for \(L\) under the remaining contact \(E=0\). Where
\(E_w\ne0\), solving for \(w\) makes the two constrained derivatives
proportional to \(E_wL_p-E_pL_w\) and \(E_wL_v-E_vL_w\).
Subscripts here denote partial derivatives. Clearing their denominators
defines the integer polynomials
\begin{equation}\label{eq:six-H}
 H_p=3D_L^2(E_wL_p-E_pL_w),\qquad
 H_v=3D_L^2(E_wL_v-E_vL_w).
\end{equation}
The denominators cancel, and the coefficients of each polynomial have
greatest common divisor one. The exact candidate is selected by
\begin{equation}\label{eq:six-system}
 \mathcal H=(E,H_p,H_v)=0
\end{equation}
in the rational box
\begin{equation}\label{eq:six-box}
\begin{gathered}
 c_6=10^{-15}(990891271422741,700107453290074,847026682317516),\\
 \mathcal B_6=c_6+[-10^{-12},10^{-12}]^3.
\end{gathered}
\end{equation}

The matrix \(M_6=J_{\mathcal H}(c_6)^{-1}\) exists over \(\mathbb Q\).
The correction \(N_6(z)=z-M_6\mathcal H(z)\) satisfies the third row of
\eqref{eq:exact-verification-summary}. At its unique root
\((p_*,v_*,w_*)\), interval evaluation gives
\begin{equation}\label{eq:six-signs}
\begin{gathered}
 \min\{p_*,1-p_*,v_*,1-v_*,w_*,1-w_*,a,b,c,u-1,g,1-3Y\}>0,\\
 d(p_*,v_*)>0.378,\qquad D_L(p_*,v_*,w_*)>0.659,
 \qquad E_w(p_*,v_*,w_*)<-0.0037.
\end{gathered}
\end{equation}
All divisions are therefore valid. Since \(E=0\), the three vector
identities hold exactly and give the nine segment relations after rotation.
For containment, the same interval evaluation proves
\[
 \max\{Y+b,Y+a,Y-X,X+2Y+g-1\}<-6.
\]
Together with the positive scales, these inequalities put \(T_1,T_4\)
in \(\{(x,y):x\ge Y,\ y\ge Y,\ x+y\le1-Y\}\). Invariance under
\(R\) gives containment of the other four triangles. The supporting-edge
argument in Subsection~\ref{subsec:six} proves minimality, and
\begin{equation}\label{eq:six-value-isolation}
 1.05720611358<\lambda(\mathcal F_6)=L(p_*,v_*,w_*)<1.05720611360.
\end{equation}
This completes the proof of Proposition~\ref{prop:exact-verification}.

\section{Coordinates of the rational 303-member family}\label{app:large-data}

Tables~\ref{tab:large-coordinates} and~\ref{tab:large-coordinates-last} define the family in
Theorem~\ref{thm:main} by
\[
 \mathbf p^{(k)}=10^{-9}(X_k,Y_k,Z_k),\qquad
 P_{k,i}=T(\rho^i\mathbf p^{(k)})\quad(0\le k\le100,\ 0\le i\le2).
\]
The entries are integers; $k=0$ denotes the support and $k=1,\ldots,100$
the consecutive bridges. Section~\ref{sec:finite-bounds} verifies the segment
relations and total scale directly from these coordinates.

\par\medskip
\noindent\begin{minipage}{\linewidth}
\makeatletter\def\@captype{table}\makeatother
\centering
\caption{Integer coordinate numerators for $0\le k\le59$.}
\label{tab:large-coordinates}
{\small
\setlength{\tabcolsep}{4pt}
\begin{tabular}{rrrr@{\hspace{1.2em}}rrrr}
\toprule
$k$ & $X_k$ & $Y_k$ & $Z_k$ & $k$ & $X_k$ & $Y_k$ & $Z_k$\\
\midrule
0 & 310226993 & 415666856 & 0 & 30 & 300895260 & 415040422 & 283725044\\
1 & 309931259 & 415665639 & 274107058 & 31 & 300566393 & 414996144 & 284096350\\
2 & 309634609 & 415663193 & 274404925 & 32 & 300236245 & 414950168 & 284470686\\
3 & 309337064 & 415659507 & 274704942 & 33 & 299904877 & 414902483 & 284848006\\
4 & 309038322 & 415654565 & 275007105 & 34 & 299572298 & 414853072 & 285228264\\
5 & 308738731 & 415648359 & 275311725 & 35 & 299238441 & 414801907 & 285611476\\
6 & 308438119 & 415640875 & 275618467 & 36 & 298903359 & 414748977 & 285997724\\
7 & 308136404 & 415632098 & 275927517 & 37 & 298566952 & 414694248 & 286386983\\
8 & 307833714 & 415622016 & 276238970 & 38 & 298229152 & 414637689 & 286779387\\
9 & 307529893 & 415610613 & 276552713 & 39 & 297890099 & 414579301 & 287175008\\
10 & 307225037 & 415597879 & 276868913 & 40 & 297549630 & 414519037 & 287573743\\
11 & 306919187 & 415583800 & 277187488 & 41 & 297207855 & 414456895 & 287975765\\
12 & 306612220 & 415568359 & 277508412 & 42 & 296864632 & 414392829 & 288381003\\
13 & 306304161 & 415551543 & 277831823 & 43 & 296520032 & 414326828 & 288789616\\
14 & 305994990 & 415533336 & 278157711 & 44 & 296174019 & 414258865 & 289201557\\
15 & 305684800 & 415513728 & 278486105 & 45 & 295826681 & 414188935 & 289616873\\
16 & 305373450 & 415492698 & 278816933 & 46 & 295477894 & 414116990 & 290035511\\
17 & 305061049 & 415470238 & 279150348 & 47 & 295127689 & 414043015 & 290457619\\
18 & 304747453 & 415446323 & 279486260 & 48 & 294775949 & 413966960 & 290883208\\
19 & 304432818 & 415420948 & 279824822 & 49 & 294422703 & 413888806 & 291312420\\
20 & 304117089 & 415394096 & 280165894 & 50 & 294067929 & 413808525 & 291745253\\
21 & 303800181 & 415365745 & 280509552 & 51 & 293711763 & 413726123 & 292181733\\
22 & 303482092 & 415335876 & 280855897 & 52 & 293353970 & 413641520 & 292621776\\
23 & 303162877 & 415304481 & 281204946 & 53 & 292994631 & 413554708 & 293065650\\
24 & 302842511 & 415271540 & 281556658 & 54 & 292633693 & 413465650 & 293513301\\
25 & 302520965 & 415237035 & 281911078 & 55 & 292271291 & 413374349 & 293964788\\
26 & 302198162 & 415200940 & 282268258 & 56 & 291907259 & 413280741 & 294420024\\
27 & 301874282 & 415163258 & 282628280 & 57 & 291541607 & 413184797 & 294879211\\
28 & 301549112 & 415123949 & 282990993 & 58 & 291174294 & 413086479 & 295342376\\
29 & 301222790 & 415083012 & 283356621 & 59 & 290805430 & 412985788 & 295809571\\
\bottomrule
\end{tabular}
}
\end{minipage}
\par

\par\medskip
\noindent\begin{minipage}{\linewidth}
\makeatletter\def\@captype{table}\makeatother
\centering
\caption{Integer coordinate numerators for $60\le k\le100$.}
\label{tab:large-coordinates-last}
{\small
\setlength{\tabcolsep}{4pt}
\begin{tabular}{rrrr@{\hspace{1.2em}}rrrr}
\toprule
$k$ & $X_k$ & $Y_k$ & $Z_k$ & $k$ & $X_k$ & $Y_k$ & $Z_k$\\
\midrule
60 & 290434672 & 412882600 & 296280770 & 81 & 282208599 & 410075422 & 307249241\\
61 & 290062281 & 412776957 & 296756327 & 82 & 281792884 & 409906842 & 307829693\\
62 & 289688092 & 412668780 & 297236020 & 83 & 281374671 & 409734611 & 308416254\\
63 & 289312200 & 412558066 & 297720042 & 84 & 280953837 & 409558621 & 309009026\\
64 & 288934451 & 412444736 & 298208354 & 85 & 280530386 & 409378816 & 309608204\\
65 & 288555003 & 412328804 & 298701121 & 86 & 280104264 & 409195115 & 310213857\\
66 & 288173571 & 412210150 & 299198266 & 87 & 279675415 & 409007434 & 310826124\\
67 & 287790326 & 412088792 & 299700101 & 88 & 279243643 & 408815622 & 311445186\\
68 & 287405201 & 411964672 & 300206484 & 89 & 278809001 & 408619636 & 312071318\\
69 & 287018148 & 411837740 & 300717539 & 90 & 278371379 & 408419360 & 312704559\\
70 & 286629019 & 411707909 & 301233386 & 91 & 277930726 & 408214700 & 313345120\\
71 & 286237957 & 411575186 & 301754196 & 92 & 277486967 & 408005548 & 313993154\\
72 & 285844820 & 411439486 & 302279882 & 93 & 277039911 & 407791738 & 314648882\\
73 & 285449594 & 411300762 & 302810645 & 94 & 276589564 & 407573194 & 315312601\\
74 & 285052298 & 411158979 & 303346537 & 95 & 276135737 & 407349741 & 315984445\\
75 & 284652770 & 411014037 & 303887619 & 96 & 275678413 & 407121284 & 316664724\\
76 & 284251077 & 410865915 & 304434098 & 97 & 275217393 & 406887636 & 317353610\\
77 & 283847305 & 410714601 & 304985923 & 98 & 274752737 & 406648732 & 318051409\\
78 & 283441101 & 410559917 & 305543131 & 99 & 274284071 & 406404284 & 318758270\\
79 & 283032682 & 410401898 & 306106108 & 100 & 273811551 & 406154271 & 319474680\\
80 & 282621828 & 410240409 & 306674717 &  &  &  & \\
\bottomrule
\end{tabular}
}
\end{minipage}
\par

\section*{Acknowledgement of AI assistance}
OpenAI's GPT models assisted with the exposition, proof development and
refinement, and exact computation and numerical verification.
The authors take responsibility for the mathematical content and final manuscript.
\bibliographystyle{amsplain}
\bibliography{references}
\end{document}